\documentclass[final]{amsart}

\usepackage{enumerate}
\usepackage{amssymb,amsfonts,amsmath,amsthm}
\usepackage{tikz}
\usepackage{showkeys}
\usepackage[all]{xy}

\tikzset{node distance=2cm, auto}

\theoremstyle{remark}
\newtheorem{example}{Example}[section]
\newtheorem{remark}[example]{Remark}
\theoremstyle{definition}
\newtheorem{definition}[example]{Definition}
\theoremstyle{plain}
\newtheorem{proposition}[example]{Proposition}
\newtheorem{corollary}[example]{Corollary}

\newtheorem{theorem}[example]{Theorem}
\newtheorem{lemma}[example]{Lemma}

\newcommand{\mref}[1]{(\ref{#1})}

\newcommand{\ZZ}{\mathbb{Z}}

\newcommand{\sB}{\mathcal{B}}
\newcommand{\sL}{\mathcal{L}}
\newcommand{\sM}{\mathcal{M}}

\newcommand{\sE}{\mathcal{E}}
\newcommand{\sG}{\mathcal{G}}
\newcommand{\sO}{\mathcal{O}}
\newcommand{\sT}{{\mathcal{T}}}
\newcommand{\sF}{{\mathcal{F}}}
\newcommand{\sW}{{\mathcal{W}}}
\newcommand{\sQ}{{\mathcal{Q}}}
\newcommand{\sV}{{\mathcal{V}}}

\newcommand{\bD}{{\bf D}}
\newcommand{\bR}{{\bf R}}

\newcommand{\flag}{\mathrm{Flag}}
\newcommand{\gr}{\mathrm{Gr}}

\DeclareMathOperator{\Spec}{Spec}

\DeclareMathOperator{\rD}{D}

\DeclareMathOperator{\Qcoh}{\textbf{Qcoh}}

\DeclareMathOperator{\Aut}{Aut}
\DeclareMathOperator{\End}{End}
\DeclareMathOperator{\Hom}{Hom}
\DeclareMathOperator{\Ext}{Ext}

\DeclareMathOperator{\univ}{univ}
\DeclareMathOperator{\spec}{Spec}
\DeclareMathOperator{\Isom}{Isom}
\DeclareMathOperator{\SB}{SB}
\DeclareMathOperator{\Pic}{Pic}

\DeclareMathOperator{\Gal}{Gal}
\DeclareMathOperator{\pgl}{PGL}
\DeclareMathOperator{\gl}{GL}

\newcommand{\CC}{{\mathbb C}}

\begin{document}

\title{Pushforwards of tilting sheaves}

\author[A. Dhillon]{Ajneet Dhillon}
\address{Department of Mathematics, Western University, Canada}
\email{adhill3@uwo.ca}

\author[N. Lemire]{Nicole Lemire}
\address{Department of Mathematics, Western University, Canada}
\email{nlemire@uwo.ca}

\author[Y. Yan]{Youlong Yan}
\address{Department of Mathematics, Western University, Canada}
\email{yyoulong@gmail.com}

\begin{abstract}
We investigate the behaviour of tilting sheaves under pushforward by a  finite Galois
morphism.
We determine conditions under which such a pushforward of a tilting sheaf is 
a tilting sheaf.  We then produce some examples of Severi-Brauer flag varieties
and arithmetic toric varieties in which our method produces a tilting sheaf,
adding to the list of positive results in the literature.
We also produce some counterexamples to show that such a pushfoward need not
be a tilting sheaf.  
\end{abstract}

\maketitle

%
%
%
%

\section{Introduction}

A well known conjecture asserts that projective homogeneous spaces over $\CC$
have full strong exceptional collections, see \S 2 for definitions. There are
many positive results in this direction starting with \cite{Be},
\cite{Kapranov} and \cite{Kapranov2}. A survey of results in this
direction can be found in \cite{kp}. Over non-algebraically closed ground
fields, it is known that such full strong exceptional collections cannot
exist in general. Weakening the assumption on the ground field requires weakening the conclusion. In this direction, there are  several weakenings that could be considered. The simplest is to ask for just a full exceptional collection. It
is known, see \cite{na2} and \cite{na3}, that such collections cannot exist. Rather than consider
exceptional collections, we consider tilting sheaves, see \S2 for their definition
and relationship with exceptional collections.  

The purpose of this paper is to study pushforwards of tilting sheaves. 
We consider the following setup : given
a variety $Y$ with tilting sheaf $\sT$ defined over some prime subfield and another variety $X$ defined
over $k$ that is an $l/k$-form 
of $Y$. Here $l/k$ is a Galois extension of  fields. The pullback $\sT_l$ under the morphism $\pi_l^Y:Y_l\to Y$ induced by base change can be seen to be a tilting sheaf on $Y_l$ (see \mref{p:main}).
There is a projection
$$
p : Y_l\rightarrow X.
$$
We investigate when the pushforward $p_*(\sT_l)$ is a tilting sheaf on $X$.
In previous work a number of positive results were obtained, see \cite{y} and \cite{na} for
certain homogeneous varieties and towers of homogeneous varieties. 
In this work we give
a counterexample to show that these sheaves need not be tilting sheaves in general, see section \ref{s:outer}.

As mentioned in the previous paragraph, various positive results regarding tilting sheaves on
twisted forms of varieties
have been obtained in recent papers.
In \cite{blunk}, tilting sheaves are constructed on generalized Severi-Brauer varieties via
a different approach to that given in this paper. The thesis \cite{y}, constructs tilting bundles
on Severi-Brauer schemes and some arithmetic toric varieties using the procedure in this work.
More recently, these ideas have been extended to generalized Severi-Brauer schemes and 
positive characteristic in \cite{na}.

A more detailed overview of the paper follows.  In section 2, we discuss basic results
about tilting sheaves, generation in derived categories and exceptional collections.
A criterion for $p_*(\sT_l)$ to be a tilting sheaf on $Y$ is given, see (\ref{ss:descent}).
In section 3 we recall Kapranov's exceptional collection on flag varieties from (\cite{Kapranov2}).
The section ends by noting that Kapranov's exceptional collection produces a tilting sheaf
on any inner form of a partial flag variety, also known as Severi-Brauer flag
varieties. This generalises a result of \cite{blunk}, see also \cite{y} and \cite{na}.
In section 4, we show that if we consider outer forms of flag varieties  then the pushforward
does not produce a tilting sheaf. 
The final section shows how
a tilting sheaf can be constructed on certain kinds of arithmetic toric varieties.

%
%
%
%

\section*{Notations and conventions}

We will work over a ground field $k$ of characteristic 0. We need the characteristic 0 assumption
in order to make use of the theorem of Borel-Bott-Weil. We will have occasion to make use of
possibly non-commutative $k$-algebras. This notion means that $k$ is in the center of the algebra.
We will assume all rings have identities and all modules over them are unital.
We will assume throughout that $X$ is a smooth projective $k$-variety.

\section*{Acknowledgements}

We would like to thank the referee for numerous helpful comments and a careful reading of the
paper.

%
%
%
%

%
%
%
%

\section{Tilting sheaves and base change}


\subsection{Generation in derived categories}
Let $\bD$ be a triangulated category and $S$ a set of objects in $\bD$. We denote by $<S>$ the smallest full triangulated
category containing all the objects in $S$. We denote by $<S>^\kappa$ the smallest thick triangulated containing all the
objects in $S$. Note that thick subcategories are assumed to be full.

An object $C$ of $\bD$ is said to be \emph{compact} if $\Hom(C,-)$ commutes with direct sums. We denote by
$\bD^c$ the full subcategory of compact objects.

Given a set $S$ of objects of $\bD$ we define $S^\perp$ to be the full subcategory of $\bD$ consisting
of objects $A$ with $\Hom_\bD(E[i],A)=0$ for all $E\in S$ and $i\in \ZZ$. We say that $S$  
\emph{right spans} $\bD$ if $S^\perp=\{0\}$.

If $\bD^c$ right spans $\bD$ we say that $\bD$ is compactly generated.

\begin{theorem} \label{t:rn} (Ravenel and Neeman)
Let $\bD$ be a compactly generated triangulated category. Then a set of compact objects $S$ right spans
$\bD$ if and only if $<S>^\kappa =\bD^c$.
\end{theorem}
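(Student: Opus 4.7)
\bigskip

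\noindent\textbf{Proof plan.} The implication $(\Leftarrow)$ is the formal direction. Fix any $A\in S^{\perp}$, and consider
\[
\mathcal{E}_A \;=\; \{\,E\in \bD^{c}\;:\;\Hom_{\bD}(E[i],A)=0\text{ for all }i\in\ZZ\,\}.
\]
A straightforward check shows that $\mathcal{E}_A$ is closed under shifts, cones, and direct summands, so it is a thick subcategory of $\bD^{c}$ containing $S$. Hence $\mathcal{E}_A\supseteq\,<S>^{\kappa}=\bD^{c}$. Because $\bD$ is compactly generated, $\bD^{c}$ itself right spans $\bD$, and it follows that $A=0$. Thus $S^{\perp}=0$.

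\medskip

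\noindent For $(\Rightarrow)$ the strategy is to interpose the \emph{localizing} subcategory generated by $S$. Let $\mathcal{L}$ denote the smallest full triangulated subcategory of $\bD$ containing $S$ and closed under all small coproducts. First I would show the easy inclusion: $\mathcal{L}\cap\bD^{c}\supseteq\,<S>^{\kappa}$, since $\mathcal{L}\cap\bD^{c}$ is a thick subcategory of $\bD^{c}$ containing $S$. Next, using Brown representability (valid because $\bD$ is compactly generated) one obtains a Bousfield localization functor $L:\bD\to\bD$ with kernel $\mathcal{L}$, and for every object $X$ a triangle $X'\to X\to LX\to X'[1]$ with $X'\in\mathcal{L}$ and $LX\in \mathcal{L}^{\perp}$. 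Since $S\subseteq\mathcal{L}$ one has $\mathcal{L}^{\perp}\subseteq S^{\perp}=0$, so $LX=0$ for every $X$, which forces $X\cong X'\in\mathcal{L}$; that is, $\mathcal{L}=\bD$ and in particular $\mathcal{L}\cap\bD^{c}=\bD^{c}$.

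\medskip

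\noindent The main obstacle is the reverse inclusion $\mathcal{L}\cap\bD^{c}\subseteq\,<S>^{\kappa}$. This is Neeman's ``smashing''/Thomason-style localization theorem: a compact object that lies in the localizing subcategory generated by $S$ already lies in its thick closure. The proof I have in mind proceeds by passing to the Verdier quotient $\bD/<S>^{\kappa}_{\bD}$, where $<S>^{\kappa}_{\bD}$ is the localizing hull of $S$, and using that the quotient of a compactly generated triangulated category by a localizing subcategory generated by compact objects is again compactly generated, with compact objects the idempotent completion of the image of $\bD^{c}$. Any $C\in\mathcal{L}\cap\bD^{c}$ then maps to a compact object that is simultaneously zero in the quotient, and unwinding this zero via a standard cofinality argument expresses $C$ as a direct summand of an iterated cone of shifts of objects of $S$, placing it in $<S>^{\kappa}$. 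Combining the two inclusions with the previous paragraph yields $<S>^{\kappa}=\mathcal{L}\cap\bD^{c}=\bD^{c}$, completing the proof.

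\medskip

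\noindent I would of course cite \cite{weibel}-style references (really Neeman's work and Ravenel's topological precursor) rather than reproduce the quotient-category argument in detail, since only the statement is needed in later sections.
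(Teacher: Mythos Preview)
Your sketch is correct and follows the standard Neeman argument (Brown representability to get $\mathrm{Loc}(S)=\bD$, combined with the Thomason--Neeman localization result that compacts in $\mathrm{Loc}(S)$ coincide with the thick closure of $S$). Note, however, that the paper does not prove this theorem at all: its entire ``proof'' is the citation ``See \cite[Theorem 2.1.2]{BV}.'' So your closing remark---that you would cite the literature rather than reproduce the quotient-category argument---is precisely what the paper does, and is the appropriate choice here since the result is used only as a black box in the applications.
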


\begin{proof}
See \cite[Theorem 2.1.2]{BV}.
\end{proof}

Let $Y$ be a scheme.
We denote  the unbounded derived category of quasi-coherent sheaves on $Y$ by $\rD(\Qcoh(Y))$ and 
the bounded derived category of coherent sheaves by $\rD^b(Y)$.

\begin{proposition} \label{p:compactGenerator}
Let $Y$ be a quasi-compact, separated scheme. Then $\rD(\Qcoh(Y))$ is compactly generated.
\end{proposition}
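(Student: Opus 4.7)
The plan is to follow the strategy of Bondal--Van den Bergh, inducting on the minimal size $n$ of an affine cover of $X$. First, $\rD(\Qcoh(X))$ admits all small coproducts because $\Qcoh(X)$ is a Grothendieck category in which small coproducts are exact; hence the notion of compactness is well behaved, and by (\ref{t:rn}) it suffices to exhibit a \emph{set} $S$ of compact objects with $S^\perp = 0$.

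For the base case $n=1$, write $X = \Spec R$; then $\rD(\Qcoh(X)) \simeq \rD(R\text{-Mod})$, and the structure sheaf $\sO_X$ corresponds to $R$. This module is compact since $\Hom(R,-)=\Gamma$ commutes with direct sums, and $\{R\}$ right-spans because $\Hom(R[-i],M^\bullet)=H^i(M^\bullet)$ for every complex $M^\bullet$.

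For the inductive step, write $X = U \cup V$ with $U$ an open subscheme covered by fewer affines and $V$ affine. By induction $U$ has a compact generator $E_U$, which may be taken to be a perfect complex. Separatedness of $X$ forces $U \cap V$ to be affine, so $E_U|_{U\cap V}$ is perfect on an affine and lifts to a perfect complex on $V$; I would then glue along $U \cap V$ to obtain a perfect (hence compact) complex $\widetilde E$ on $X$ restricting to $E_U$ on $U$. Next I would augment $\widetilde E$ by a perfect complex $K_Z$ on $X$ supported on the closed set $Z = X \setminus U$, constructed as the extension by zero of a Koszul complex on $V$ for a finite set of generators of the ideal of $Z \subseteq V$. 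The orthogonality check is then direct: if $A \in \{\widetilde E, K_Z\}^\perp$, then $A|_U$ lies in $E_U^\perp = 0$ by induction, so the cohomology of $A$ is set-theoretically supported on $Z$; orthogonality to shifts of $K_Z$ on the affine $V$ then kills $A$ by a local calculation.

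The main obstacle is the construction of $\widetilde E$. In the derived category, gluing of objects along $U \cap V$ is obstructed by negative $\Ext$ groups, and one must restrict attention to perfect complexes and invoke Thomason--Trobaugh-style arguments showing that these obstructions vanish precisely because of the separatedness hypothesis. This gluing step is the technical heart of the Bondal--Van den Bergh proof, and is where the hypotheses on $X$ would be used most delicately.
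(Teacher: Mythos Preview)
The paper does not prove this proposition at all: its ``proof'' is the single sentence ``See \cite[proposition 2.5]{N2}.'' So your proposal goes considerably further than the paper, and the relevant comparison is with the cited literature (Neeman, and the Bondal--Van den Bergh argument you are channelling), not with the paper itself.

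Your sketch is on the right track but has two concrete inaccuracies worth flagging. First, the claim that separatedness forces $U\cap V$ to be affine is wrong as stated: separatedness guarantees that the intersection of two \emph{affine} opens is affine, but your $U$ is only assumed to be covered by $n-1$ affines, so $U\cap V$ need not be affine. What you actually get is that $U\cap V$ is quasi-compact separated with an affine cover of size at most $n-1$, which is enough for the Mayer--Vietoris/localisation machinery but not for the na\"{\i}ve lifting-then-gluing picture you describe. Second, even granting the right setup, a perfect complex $E_U$ on $U$ does \emph{not} in general extend to a perfect complex on $X$: Thomason--Trobaugh shows the obstruction lies in $K_0$, and the standard fix is to replace $E_U$ by $E_U\oplus E_U[1]$ (which has trivial $K_0$ class and still generates). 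You gesture at Thomason--Trobaugh in your final paragraph, but the text of the inductive step reads as though the extension is automatic once $U\cap V$ is affine, which is doubly off.

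Finally, your orthogonality check is not quite right as written: from $A\in\widetilde E^\perp$ in $\rD(\Qcoh X)$ you cannot directly conclude $A|_U\in E_U^\perp$ in $\rD(\Qcoh U)$, since $\bR\Hom_X(\widetilde E,A)$ and $\bR\Hom_U(\widetilde E|_U,A|_U)$ differ. The usual argument runs the other way round: first use the compact generators of $\rD_Z(\Qcoh X)$ (your $K_Z$ and its twists) and the localisation triangle $\bR\Gamma_Z A\to A\to \bR j_*(A|_U)$ to reduce to the case $A\simeq \bR j_*(A|_U)$, and then adjunction gives $\bR\Hom_X(\widetilde E,\bR j_*(A|_U))=\bR\Hom_U(E_U,A|_U)$, whence $A|_U=0$ by induction and so $A=0$.
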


\begin{proof}
See \cite[proposition 2.5]{N2}.
\end{proof}

A complex in $\rD(\Qcoh(Y))$ is said to be \emph{perfect} if it is locally quasi-isomorphic to a bounded complex
of free sheaves.

\begin{proposition} \label{p:compactPerfect} Recall, from our notations and conventions, that $X$ is a smooth projective variety.
Then $C\in\rD(\Qcoh(X))$ is compact if and only if $C$ is perfect.
\end{proposition}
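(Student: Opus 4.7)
The plan is to establish both inclusions $\mathrm{Perf}(X) \subset \rD(\Qcoh(X))^c$ and $\rD(\Qcoh(X))^c \subset \mathrm{Perf}(X)$.

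For perfect $\Rightarrow$ compact, the argument is direct: for perfect $C$, the functor $\bR\sHom(C,-)$ commutes with arbitrary direct sums, a local statement which reduces to bounded complexes of finite-rank free modules. Combined with the fact that on the quasi-compact, separated scheme $X$ the functor $\bR\Gamma(X,-)$ commutes with direct sums (B\"okstedt-Neeman), we obtain that $\Hom_{\rD(\Qcoh(X))}(C,-) = \h^0\bR\Gamma(X, \bR\sHom(C,-))$ commutes with direct sums, making $C$ compact.

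For the converse I would apply \ref{t:rn}. Let $L$ be an ample line bundle on $X$ and set $S = \{L^{\otimes n} : n \in \ZZ\}$; each $L^{\otimes n}$ is perfect and thus compact by the previous step. Next I would verify that $S$ right spans $\rD(\Qcoh(X))$: given a nonzero $F$ with $\Hom(L^{\otimes n}[i], F) = 0$ for all $n, i$, choose the smallest $p$ with nonzero cohomology sheaf $\mathcal{H}^p(F)$, select a nonzero coherent subsheaf $G \subset \mathcal{H}^p(F)$, use ampleness of $L$ to find $n$ admitting a nonzero global section of $G \otimes L^n$, and compose the resulting map $L^{-n} \to \mathcal{H}^p(F)$ with the canonical truncation map $\mathcal{H}^p(F)[-p] \to F$ to produce a nonzero morphism $L^{-n}[-p] \to F$, a contradiction. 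Then \ref{t:rn} gives $<S>^\kappa = \rD(\Qcoh(X))^c$. Since perfect complexes form a thick triangulated subcategory containing $S$, we have $<S>^\kappa \subset \mathrm{Perf}(X)$; combined with the first direction this yields the chain $\mathrm{Perf}(X) \subset \rD(\Qcoh(X))^c = <S>^\kappa \subset \mathrm{Perf}(X)$, forcing all three categories to coincide.

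The main obstacle is the right-spanning step for objects $F$ unbounded below, since the argument above singled out a smallest nonvanishing cohomological degree. On the Noetherian scheme $X$ of finite Krull dimension this can be controlled by combining Serre vanishing (which gives vanishing of $\h^i(X, F \otimes L^n)$ for large $n$ and high $i$) with a limit/truncation argument; the technical details are standard and essentially subsumed by \cite[Theorem 2.1.2]{BV}.
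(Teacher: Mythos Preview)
The paper does not prove this statement; its entire proof is the citation ``See \cite[Lemma 3.5]{C}.'' Your proposal therefore goes well beyond what the paper does, supplying an actual argument along the standard lines (perfect $\Rightarrow$ compact directly, and compact $\Rightarrow$ perfect via \mref{t:rn} applied to the set $S=\{L^{\otimes n}\}$ for an ample $L$). That strategy is correct and is essentially how the result is proved in the references the paper relies on.

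One correction to your discussion of the unbounded-below obstacle: invoking Serre vanishing is not the right patch, since Serre vanishing concerns \emph{coherent} sheaves, whereas the objects $F\in\rD(\Qcoh(X))$ you must handle are unbounded complexes of quasi-coherent sheaves. The correct input is that $\bR\Gamma(X,-)$ has finite cohomological amplitude, say $[0,d]$, because $X$ is separated Noetherian of finite Krull dimension (equivalently, admits a finite affine cover). From the triangle $\tau^{<p-d}F\to F\to \tau^{\ge p-d}F$ one then sees that $\Hom(L^{-n}[-p],F)\cong \Hom(L^{-n}[-p],\tau^{\ge p-d}F)$, so for any chosen $p$ with $\mathcal{H}^p(F)\neq 0$ you may replace $F$ by the bounded-below complex $\tau^{\ge p-d}F$ and run the argument you already gave. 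With this adjustment your sketch is complete; the reference to \cite[Theorem 2.1.2]{BV} is not what closes the gap, as that is precisely \mref{t:rn}, which you have already invoked.
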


\begin{proof}
See \cite[Lemma 3.5]{C}.
\end{proof}

\begin{proposition}\label{p:generates}
Let  $l/k$ be a finite field extension. We have a canonical morphism
$\pi_l: X_l\rightarrow X$. Suppose that $\sT$ is a locally free sheaf on $X$. Then $\sT^\perp=\{0\}$ if and only if $(\pi_l^*\sT)^{\perp}=\{0\}$.
\end{proposition}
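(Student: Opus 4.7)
The plan is to reduce the statement to a question about right-spanning classes via Theorem \ref{t:rn}, then exploit the adjunction $(\pi_l^*, \pi_{l*})$ together with the fact that $\pi_l$ is both affine and faithfully flat. Since $X$ and $X_l$ are smooth projective, Proposition \ref{p:compactGenerator} gives that $\rD(\Qcoh(X))$ and $\rD(\Qcoh(X_l))$ are compactly generated, while Proposition \ref{p:compactPerfect} identifies compact objects with perfect complexes; smoothness then identifies these with $\bD^b(X)$ and $\bD^b(X_l)$ respectively. The locally free sheaves $\sT$ and $\pi_l^*\sT$ are perfect, hence compact. So by Theorem \ref{t:rn}, the claim $\langle \sT \rangle^\kappa = \bD^b(X)$ is equivalent to $\sT^\perp = 0$ in $\rD(\Qcoh(X))$, and similarly for the pullback.

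I would therefore reformulate the proposition as: $\sT^\perp = 0$ in $\rD(\Qcoh(X))$ if and only if $(\pi_l^*\sT)^\perp = 0$ in $\rD(\Qcoh(X_l))$. The morphism $\pi_l$ is finite (since $l/k$ is finite), hence affine, so $\pi_{l*}$ is exact and faithful on quasi-coherent sheaves, hence sends nonzero objects of $\rD(\Qcoh(X_l))$ to nonzero objects of $\rD(\Qcoh(X))$. Since $l/k$ is faithfully flat, so is $\pi_l$, and the analogous statement holds for $\pi_l^*$.

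For the forward direction, suppose $\sT^\perp = 0$ and let $A \in (\pi_l^*\sT)^\perp$. By adjunction
\[
\Hom_{X_l}(\pi_l^*\sT[i], A) \cong \Hom_X(\sT[i], \pi_{l*} A) = 0
\]
for all $i$, so $\pi_{l*} A \in \sT^\perp = 0$, and faithfulness of $\pi_{l*}$ forces $A = 0$. For the converse, suppose $(\pi_l^*\sT)^\perp = 0$ and let $A \in \sT^\perp$ in $\rD(\Qcoh(X))$. I would argue that $\pi_l^* A \in (\pi_l^*\sT)^\perp$ by computing, again via adjunction,
\[
\Hom_{X_l}(\pi_l^*\sT[i], \pi_l^* A) \cong \Hom_X(\sT[i], \pi_{l*}\pi_l^* A).
\]
Choosing a $k$-basis of $l$ yields a splitting $\pi_{l*} \sO_{X_l} \cong \sO_X^{\oplus [l:k]}$ as an $\sO_X$-module, so by the projection formula $\pi_{l*}\pi_l^* A \cong A^{\oplus [l:k]}$ in $\rD(\Qcoh(X))$; hence the Hom vanishes. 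Then $\pi_l^* A = 0$ and faithful flatness gives $A = 0$.

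The main obstacle is a bookkeeping one: verifying that $\pi_{l*}\pi_l^* A \cong A^{\oplus [l:k]}$ on the level of the unbounded derived category, which amounts to noting that $\pi_{l*}\sO_{X_l}$ is a locally free $\sO_X$-module of rank $[l:k]$, so both $\pi_{l*}$ and the tensor product used in the projection formula are exact and commute with arbitrary direct sums on complexes. Once that is in place, the adjunction plus faithfulness arguments on each side are formal.
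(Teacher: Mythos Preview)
Your proof is correct. It differs from the paper's in two places worth noting.

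For the forward implication, the paper argues directly at the level of thick subcategories: given any coherent sheaf $\sF$ on $X_l$, one has $\pi_{l*}\sF\in\langle\sT\rangle^\kappa=\bD^b(X)$, hence $\pi_l^*\pi_{l*}\sF\in\langle\pi_l^*\sT\rangle^\kappa$; but $\pi_l^*\pi_{l*}\sF\cong\sF\otimes_k l$ contains $\sF$ as a direct summand, so $\sF\in\langle\pi_l^*\sT\rangle^\kappa$. This avoids invoking Theorem~\ref{t:rn} for that direction. Your route instead passes through the right-spanning reformulation on both sides and uses the adjunction $(\pi_l^*,\pi_{l*})$ together with faithfulness of $\pi_{l*}$. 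Both are short; the paper's argument is slightly more elementary in that it does not need the compact-generation machinery for this half.

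For the converse, the paper also reduces to showing $\sT^\perp=0$, but to prove $\pi_l^*\sM\in(\pi_l^*\sT)^\perp$ it uses flat base change for $\bR\Hom$ across the cartesian square $X_l\to X$ over $\Spec(l)\to\Spec(k)$, computing $u^*\bR\Hom(\sT,\sM)\cong\bR\Hom(\pi_l^*\sT,\pi_l^*\sM)$. You obtain the same vanishing via adjunction and the projection formula, using that $\pi_{l*}\pi_l^*A\cong A^{\oplus[l:k]}$. These are equivalent manipulations; your version makes the role of the finite free $\sO_X$-module $\pi_{l*}\sO_{X_l}$ explicit, while the paper's phrasing packages it as a base-change identity. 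Either way, the conclusion $\pi_l^*\sM=0\Rightarrow\sM=0$ by faithful flatness is the same.
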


\begin{proof}
First suppose that $\sT^{\perp}=\{0\}$. By \mref{t:rn}, \mref{p:compactGenerator} and \mref{p:compactPerfect}, we know that 
$<\sT>^\kappa =\bD^b(X)$ and that in order to show $(\pi_l^*\sT)^\perp=\{0\}$, it suffices to 
show that  $<\pi_l^*\sT>^\kappa = \bD^b(X_l)$. As the functor $\pi_{l}^*$ is exact we have that for each coherent sheaf $\sF$ on
$X_l$ that $(\pi_l)^*(\pi_{l})_*\sF\in (\pi_l)^*<\sT>^\kappa$. But then by exactness of $\pi_l^*$ we have that
$\sF\otimes_k l\in<\pi_l^*\sT>^\kappa$. The result follows as $\sF$ is a direct summand of $\sF\otimes_k l$.

Conversely assume that $<\pi_l^*\sT>^\kappa = \bD^b(X_l)$.  Consider the cartesian
square
\begin{center}
\begin{tikzpicture}
  \node (TL) {$X_l$};
  \node (BL) [below of=TL] {$\Spec(l)$};
  \node (TR) [right of=TL] {$X$};
  \node (BR) [right of=BL] {$\Spec(k)$};
  \draw[->] (TL) to node {$\pi_l$} (TR);
  \draw[->] (TL) to node {$q$} (BL);
  \draw[->] (BL) to node {$u$} (BR);
  \draw[->] (TR) to node {$p$} (BR);
\end{tikzpicture}
\end{center}
Suppose that $\sM\in \sT^\perp$. Then
\begin{align*}
0 &= u^*\bR\Hom(\sT,\sM) \\
 &=  u^*\bR p_*(\sT^\vee \otimes \sM) \\
 &=  \bR q_* \pi_l^*(\sT^\vee \otimes \sM) \\
 &= \bR \Hom(\pi_l^*\sT,\pi^*_l\sM).
\end{align*}
Hence $\pi^*_l\sM\in (\pi_l^*\sT)^{\perp}=\{0\}$. Finally $\sM=0$ as $\pi_l$ is faithfully flat.
\end{proof}


\subsection{Self extensions}

Recall that a coherent sheaf $\sF$ is said to have no \emph{higher self extensions} if $\Ext^i(\sF,\sF)=0$
for $i>0$.

\begin{lemma}\label{l:selfExt}
Let $l/k$ be a finite field extension. We have a canonical morphism
$\pi_l: X_l\rightarrow X$. If $\sT$ is a locally free coherent sheaf on $X$ then $\sT$ has no higher self extensions
if and only if $\pi_l^*\sT$ has no higher self extensions.
\end{lemma}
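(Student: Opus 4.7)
The plan is to reduce the statement to a base change identity between Ext groups on $X$ and on $X_l$, and then invoke faithful flatness of the field extension $l/k$.

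First, since $\sT$ is locally free, self-extensions can be computed sheaf-theoretically: $\Ext^i_X(\sT,\sT) = \h^i(X, \sT^\vee \otimes \sT)$, and similarly $\Ext^i_{X_l}(\pi_l^*\sT, \pi_l^*\sT) = \h^i(X_l, \pi_l^*\sT^\vee \otimes \pi_l^*\sT) = \h^i(X_l, \pi_l^*(\sT^\vee \otimes \sT))$. This step is routine since pullback of locally free sheaves commutes with dualisation and tensor product.

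Next, I would run exactly the flat base change argument used in the proof of \mref{p:generates}. Using the cartesian square with $p:X\to \Spec(k)$, $q:X_l\to\Spec(l)$, $u:\Spec(l)\to\Spec(k)$ and $\pi_l:X_l\to X$, and the fact that $u$ is flat (so flat base change applies to $\bR p_*$), we obtain
\[
\bR\Hom_{X_l}(\pi_l^*\sT,\pi_l^*\sT) \;=\; \bR q_*\pi_l^*(\sT^\vee\otimes\sT) \;=\; u^*\bR p_*(\sT^\vee\otimes\sT) \;=\; u^*\bR\Hom_X(\sT,\sT).
\]
Passing to cohomology, this yields a canonical isomorphism $\Ext^i_{X_l}(\pi_l^*\sT,\pi_l^*\sT)\cong l\otimes_k \Ext^i_X(\sT,\sT)$, which is the sheaf-theoretic analogue of \mref{p:extBaseChange}.

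Finally, since $l/k$ is a field extension, it is faithfully flat, so an $k$-vector space $V$ is zero if and only if $l\otimes_k V$ is zero. Applying this to $V = \Ext^i_X(\sT,\sT)$ for each $i>0$ gives the desired equivalence. I do not anticipate a real obstacle here; the only thing one must be slightly careful about is checking that flat base change is legitimate in this derived-category formulation, which is standard for a flat morphism like $u$.
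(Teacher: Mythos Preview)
Your proof is correct and is exactly the flat base change argument the paper has in mind; the paper's own proof is the single sentence ``This follows via flat base change,'' and you have simply unpacked it carefully using the same cartesian square as in \mref{p:generates}.
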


\begin{proof}
This follows via flat base change.
\end{proof}


\subsection{Tilting sheaves and base change}
We remind the reader that we have assumed that $X$ is a smooth projective $k$-scheme and $k$ is a field of characteristic 
$0$.

Recall the notion of a tilting object from \cite[1.1]{buchweitz}.

\begin{definition} An object $\sT^\bullet\in D(\Qcoh(X))$ is called a \emph{tilting object for} $X$ if
\begin{enumerate}[(i)]
	\item $\sT^\bullet$ is compact
	\item $\Hom(\sT^\bullet,\sT^\bullet[i])=0$ for $i\ne 0$ ($\sT^\bullet$ has
	no higher self extensions)
	\item ${\sT^\bullet}^\perp \cong \{0\}$.
\end{enumerate}
\end{definition}

A \emph{tilting sheaf} is a tilting object quasi-isomorphic to a coherent sheaf 
of finite rank
concentrated in cohomological degree 0.

\begin{theorem}
	Suppose $\sF$ is a tilting object for $X$. If $A=\End(\sF)$ then
\begin{enumerate}
	\item The functor ${\mathbb R}\Hom(\sF,-):D(\Qcoh(X)) \rightarrow D(\text{Mod}(A))$ 
	is an equivalence
	\item This equivalence restricts to an equivalence
	$$ D^b(X) \rightarrow {\rm perf}(A) $$
	\item If $X$ is smooth then $A$ has finite global dimension.
\end{enumerate}
\end{theorem}

\begin{proof}
\cite[7.6]{hilleVan}.
\end{proof}

\begin{proposition}\label{p:main}
Let be $l/k$ a finite field extension. Denote by
$\pi_l: X_l\rightarrow X$ the canonical morphism. Suppose that $\sT$ is a locally free sheaf on $X$.
Then $\sT$ is a tilting sheaf on $X$ if and only if  $\pi_l^*\sT$ is
a tilting sheaf on $X_l$.
\end{proposition}

\begin{proof}
Since $\sT$ and $\pi_l^*(\sT)$ are compact, the result follows from
\mref{l:selfExt} and  \mref{p:generates}.
\end{proof}

\subsection{Galois Descent}\label{ss:descent}

Consider a smooth projective variety $Y$ defined over the prime subfield $k^{\rm pr}$ of $k$. We assume that $\sT$ is a tilting sheaf on $Y$.

Consider a finite Galois extension $l/k$ with Galois group ${\rm Gal}(l/k)$. Let $X$ be 
an $l/k$ form of $Y$. This means that $X$ is a variety defined over $k$ 
and we have an $l$-isomorphism $\gamma:Y_l\cong X_l$. Both of the varieties 
$Y_l$ and $X_l$ have actions of ${\rm Gal}(l/k)$. 
Taking the ``difference'' of these two
actions produces a Galois cocycle 
\[
\phi_X: {\rm Gal}(l/k) \rightarrow {\rm Aut}_l(Y_l), \phi_X(g)=\gamma^{-1}\circ (g\gamma)
\]

\begin{remark}
Since $\sT$ is a tilting sheaf on $Y$, we have that $\sT_l=(\pi_l^Y)^*(\sT)$ is 
a tilting sheaf on $Y_l$ by \mref{p:main}.
Let
$$
p:Y_l\rightarrow X
$$
be the projection given by the composite $p=\pi_l^X\circ \gamma$ 
where $\pi_l^X:X_l\to X$ is the morphism induced by base change and $\gamma:Y_l\to X_l$ is the fixed $l$-isomorphism.
We are interested in determining when the pushforward $p_*(\sT_l)$ is a tilting
sheaf on $X$.
By \mref{p:main}, it suffices to determine whether
$\sF=p^*(p_*(\sT_l))$ is a tilting sheaf on $Y_l$.
Note that 
\[
\sF=p^*p_*(\sT_l)= \oplus_{g\in {\rm Gal}(l/k)} \phi_X(g)^*(\sT_l)
\]
descends to a sheaf on $X$. 

The sheaf $\sF=p^*p_*(\sT_l)$ is coherent and hence compact. By \mref{p:generates}, we have that $\sF=p^*p_*(\sT_l)^\perp\cong\{0\}$.

Hence to see if $p_*(\sT_l)$ is a tilting sheaf on $X$, it suffices to 
check that $\sF=p^*p_*(\sT_l)$ has no higher self extensions.
\end{remark}

In some cases the following result applies :
\begin{proposition}\label{p:easy}
In the above setting,
suppose that there is a  locally free tilting sheaf $\sT$ on $Y$. Suppose that
for each $g\in{\rm Gal}(l/k)$ we have
\[
\phi_X(g)^*(\sT_l) \cong \sT_l
\]
then there is a tilting sheaf $p_*(\sT_l)$ on $X$ obtained by pushing the tilting sheaf
on $Y_l$ forward along the projection
\[
p:Y_l\rightarrow X
\]
\end{proposition}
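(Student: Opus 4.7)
The plan is to apply Proposition~\ref{p:main} to the sheaf $\pi_*(\sT_l)$ on $X$. The map $\pi:Y_l\to X$ factors as $Y_l\cong X_l\xrightarrow{\pi_l} X$ through the $l$-isomorphism defining the form, so $\pi$ is finite étale and $\pi_*(\sT_l)$ is locally free. To invoke Proposition~\ref{p:main} we must verify (a) that $\pi_l^*\pi_*(\sT_l)$ is a tilting sheaf on $X_l$, and (b) that $\End_X(\pi_*(\sT_l))$ is geometrically of finite global dimension.

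For (a), the isomorphism $X_l\cong Y_l$ identifies $\pi_l$ with $\pi$, and therefore identifies $\pi_l^*\pi_*(\sT_l)$ with $\pi^*\pi_*(\sT_l)=\sF=\oplus_{g\in \Gal(l/k)}\phi_X(g)^*\sT_l$, as noted in Section~\ref{ss:descent}. The hypothesis gives $\sF\cong\sT_l^{\oplus n}$, where $n=[l:k]$, so it suffices to show that $\sT_l$ is tilting on $Y_l$: the endomorphism ring $\End_{Y_l}(\sT_l^{\oplus n})=\Mat_n(\End_{Y_l}(\sT_l))$ has the same global dimension by Morita invariance, and the other two tilting axioms are stable under direct sums. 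Lemma~\ref{l:selfExt} supplies vanishing of higher self extensions for $\sT_l$, Proposition~\ref{p:generates} supplies generation, and $\End_{Y_l}(\sT_l)\cong\End_Y(\sT)\otimes_{k^{\rm pr}}l$ has finite global dimension by Proposition~\ref{p:globalBase} applied to the further extension $l\hookrightarrow\bar k$, using that $\End_Y(\sT)\otimes_{k^{\rm pr}}\bar k$ has finite global dimension by the geometric hypothesis.

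For (b), flat base change together with the identifications of (a) yields
\[
\End_X(\pi_*(\sT_l))\otimes_k \bar k \;\cong\; \Mat_n\bigl(\End_Y(\sT)\otimes_{k^{\rm pr}}\bar k\bigr),
\]
which has finite global dimension by the same geometric hypothesis and Morita invariance. Proposition~\ref{p:main} then concludes that $\pi_*(\sT_l)$ is a tilting sheaf on $X$.

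The main obstacle is essentially bookkeeping: correctly tracking how the cocycle twist $\phi_X$ intertwines with the $l$-isomorphism $X_l\cong Y_l$, so that $\pi_l^*\pi_*(\sT_l)$ is identified with $\sF$ on $Y_l$. Once this identification is in place, the remainder is a routine combination of the base change results of Sections~2 and 3 with Proposition~\ref{p:main}.
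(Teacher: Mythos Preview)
Your argument is correct and follows the same route as the paper: identify $\pi_l^*\pi_*(\sT_l)$ with $\sF\cong\sT_l^{\oplus n}$, verify conditions (T1) and (T2) for $\sF$, and apply Proposition~\ref{p:main}. The only difference is in one citation: for the finite global dimension of $\End(\sF)=\Mat_n(\End(\sT_l))$ you invoke Morita invariance, whereas the paper cites Proposition~\ref{p:triangle}; since a full matrix ring is not of the triangular form $\Lambda^T_{M,U}$, your Morita argument is actually the more directly applicable one, and you also spell out the geometric finite global dimension hypothesis for Proposition~\ref{p:main} that the paper leaves implicit.
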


\begin{proof}
By the discussion in the remark above, it suffices to 
check that $p^*p_*(\sT_l)$ has no higher self extensions.
But by the calculation above and the hypothesis, we see that
$p^*p_*(\sT_l)\cong (\sT_l)^{[l:k]}$ has no higher self extensions since this 
is true for $\sT_l$.
\end{proof}

\subsection{Tilting sheaves and exceptional collections}

Many of the tilting sheaves in this work come from exceptional collections. We begin by recalling
the definition.

\begin{definition}
Let $\bD$ be a $k$-linear triangulated category. An object $E$  is said to be \emph{exceptional} if
 $$ \Hom(E, E)=k ~~~~ \text{and} ~~~~ \Hom(E, E[m])=0 ~~  \forall \,\, m \neq 0.$$
 An \emph{exceptional collection} in $\bD$ is an ordered collection $(E_0, E_1, \cdots,  E_n)$ of exceptional objects, satisfying
 $$  \Hom(E_j, E_i[m]) = 0 ~~ \text{for all}~~ m \text{ when } 0 \leq i < j \leq n.   $$
 If in addition
 $$ \Hom(E_j, E_i[m]) = 0 ~~ \text{for}~~ 0 \leq j \leq i \leq n,\, m \neq 0, $$
 we call $(E_0, E_1, \cdots,  E_n)$ a \emph{strong exceptional collection}. The collection is  \emph{full} (or \emph{complete}) if it generates $\bD$.
\end{definition}

\begin{lemma}\label{l:FSEcollection}
{Let $(\mathcal{F}_0, \mathcal{F}_1, \cdots,  \mathcal{F}_n)$ be a full strong exceptional collection of coherent sheaves on $X$, then $\mathcal{T} = \oplus_{i=0}^n \mathcal{F}_i^{\oplus l_i},\, l_i \geq 1$, is a tilting sheaf on $X$.
}
\end{lemma}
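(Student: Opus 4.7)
To verify that $\mathcal{T}$ is a tilting sheaf, I check the three defining properties, with (ii) and (iii) following directly from the strong exceptional and fullness hypotheses, and (i), the finiteness of $\gldim \End(\mathcal{T})$, handled by induction on $n$ using the formal matrix ring machinery of subsection 2.4.

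For the vanishing of higher self-extensions, additivity of $\Ext$ gives
\[
\Ext^m(\mathcal{T}, \mathcal{T}) \cong \bigoplus_{0 \le i, j \le n} \Ext^m(\mathcal{F}_i, \mathcal{F}_j)^{\oplus l_i l_j},
\]
and for $m > 0$ every summand vanishes by the combined vanishing conditions of a strong exceptional collection, which together yield $\Hom(E_j, E_i[m]) = 0$ for \emph{all} pairs $i, j$ whenever $m \ne 0$. For generation, since $l_i \ge 1$ each $\mathcal{F}_i$ is a direct summand of $\mathcal{T}$, hence $\langle \mathcal{T} \rangle^\kappa \supseteq \langle \mathcal{F}_0, \ldots, \mathcal{F}_n \rangle^\kappa = \bD^b(X)$ by the fullness hypothesis.

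For the finite global dimension of $\End(\mathcal{T})$, set $A_m = \End(\oplus_{i=0}^m \mathcal{F}_i^{\oplus l_i})$. The base case $A_0 = \Mat_{l_0}(\End(\mathcal{F}_0)) = \Mat_{l_0}(k)$ is semisimple, so $\gldim A_0 = 0$. For the inductive step I identify $A_{m+1}$ with a formal matrix ring by taking $T = A_m$, $U = \End(\mathcal{F}_{m+1}^{\oplus l_{m+1}}) = \Mat_{l_{m+1}}(k)$, and
\[
M = \Hom\bigl(\oplus_{i=0}^m \mathcal{F}_i^{\oplus l_i},\ \mathcal{F}_{m+1}^{\oplus l_{m+1}}\bigr)
\]
regarded as a $(U, T)$-bimodule via pre- and post-composition. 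The ``upper right'' Hom-space $\Hom(\mathcal{F}_{m+1}^{\oplus l_{m+1}}, \oplus_{i=0}^m \mathcal{F}_i^{\oplus l_i})$ vanishes because $\Hom(E_j, E_i) = 0$ for $i < j$, so the block decomposition yields $A_{m+1} \cong \Lambda^T_{M, U}$. Proposition \ref{p:triangle} then gives $\gldim A_{m+1} \le \gldim A_m + \gldim U + 1 = \gldim A_m + 1$, and iterating produces $\gldim \End(\mathcal{T}) = \gldim A_n \le n$. The only delicate point is keeping the orientation conventions straight so that the formal matrix ring identification goes through in the correct direction; once this is checked, the rest is routine.
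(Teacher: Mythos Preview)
Your proof is correct and follows the same approach as the paper: generation and vanishing of higher self-extensions are dispatched directly from the definitions, and finite global dimension is deduced from the formal triangular matrix bound of Proposition~\ref{p:triangle}. The paper's proof is a one-liner that only cites \ref{p:triangle}, while you have spelled out the induction and the identification $A_{m+1}\cong\Lambda^T_{M,U}$ explicitly; this is exactly the argument the citation is pointing to.
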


\begin{proof}
The second axiom follows from the fact that collection is a strong exceptional collection. Fullness of the collection amounts to axiom 3.
As $\sT$ is coherent, it is compact.
\end{proof}

%
%
%
%

\section{Partial Flag varieties}

For a fixed $k$-vector space we will denote by $F(d_1,\dots,d_s,V)$ the 
partial flag variety of flags 
\[
V_1\subseteq V_2\subseteq \ldots V_s \subseteq V
\]
with $\dim V_i =d_i$. The universal tautological flag will be denoted by
$$\sW_{d_1}^{\univ}\subseteq \sW_{d_2}^{\univ}\subseteq \dots 
\subseteq \sW_{d_s}^{\univ}.$$ 

\subsection{Kapranov's exceptional collection for partial flag varieties}

In \cite{Kapranov2} a complete exceptional collection for the partial
flag variety  $P=F(d_1,\dots,d_s,V)$ is constructed. In this subsection we will describe this
collection.

 Each such partial flag variety can be expressed as the composite of relative
Grassmann bundles.
Let $p_r:F(d_r,\dots,d_s,V)\to F(d_{r+1},\dots,d_s,V)$ be the natural fibration
with fibre $\gr(d_r,\sW^{\univ}_{d_{r+1}})$ for $r=1,\dots,s$
which we will identify with the relative Grassmann bundle
$$p_r: \gr(d_r,\sW^{\univ}_{d_{r+1}})\to F(d_{r+1},\dots,d_s,V)$$
For each $r=1,\dots,s$, let $\Gamma_r$ be the set of all  partitions corresponding to Young diagrams fitting 
into a box of size $d_r\times (d_{r+1}-d_r)$.
Then Kapranov's exceptional collection for the partial flag 
variety $F(d_1,\dots,d_s,V)$ is given
by 
$$\{\Sigma^{\alpha_1}(\sW^{\univ}_{d_1})\otimes \cdots \otimes  \Sigma^{\alpha_s}(\sW^{\univ}_{d_s}): \alpha_r\in \Gamma_r, 1\le r\le s\}$$
Note that this exceptional collection is built from the exceptional collection
on $\gr(d_s,V)$ using the sequence of relative Grassmann bundles used to determine
the partial flag variety on $V$.

\begin{theorem}\label{t:kapranov}
The sheaves $\Sigma^{\alpha_1} \sW_{d_1}^{\univ}\otimes \cdots \otimes \Sigma^{\alpha_s} \sW_{d_s}^{\univ}$ occurring in the above decomposition
form a complete, strong, exceptional collection for the partial flag variety
$F(d_1,\dots,d_s,V)$.
\end{theorem}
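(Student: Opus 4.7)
\medskip

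The plan is to prove the theorem by induction on $s$, exploiting the tower of relative Grassmann bundles
$$p_r : F(d_r,\dots,d_s,V) \;\longrightarrow\; F(d_{r+1},\dots,d_s,V), \qquad r=1,\dots,s,$$
with fibre $\gr(d_r,\sW_{d_{r+1}}^{\univ})$. The base case $s=1$ is Kapranov's original result for a single Grassmannian: the sheaves $\Sigma^{\alpha}\sW_{d_1}^{\univ}$ with $\alpha$ in the $d_1\times(\dim V - d_1)$ box form a full strong exceptional collection on $\gr(d_1,V)$. This I would simply quote from \cite{Kapranov2}, since redoing it would amount to a substantial Borel--Bott--Weil/Littlewood--Richardson computation.

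For the inductive step, I would establish (and then iterate) the following relative statement: if $E\to Y$ is a vector bundle of rank $e$ over a smooth projective $Y$ and $(\sG_0,\dots,\sG_N)$ is a full strong exceptional collection on $Y$, then on $\gr(d,E)\to Y$ with tautological subbundle $\sW$ the collection
$$\bigl\{\,p^*\sG_j\otimes \Sigma^{\alpha}\sW \;:\; 0\le j\le N,\ \alpha\in\Gamma\,\bigr\},$$
ordered compatibly (lexicographic on $\alpha\in\Gamma$ refined by the order on $Y$), is again a full strong exceptional collection on $\gr(d,E)$. Applying this repeatedly, starting from the Grassmannian $\gr(d_s,V)$ and running the tower, produces exactly the claimed collection for $F(d_1,\dots,d_s,V)$.

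The verification of the relative statement reduces to computing the higher pushforwards $\bR p_*$ of $(\Sigma^{\alpha}\sW)^{\vee}\otimes \Sigma^{\alpha'}\sW$ via the projection formula:
$$\bR\Hom\bigl(p^*\sG_j\otimes\Sigma^{\alpha}\sW,\ p^*\sG_{j'}\otimes\Sigma^{\alpha'}\sW\bigr) \;\cong\; \bR\Hom\bigl(\sG_j,\ \sG_{j'}\otimes \bR p_*((\Sigma^{\alpha}\sW)^{\vee}\otimes \Sigma^{\alpha'}\sW)\bigr).$$
Using the absolute Kapranov theorem fibrewise (together with cohomology and base change, legitimate in characteristic $0$) one identifies $\bR p_*((\Sigma^{\alpha}\sW)^{\vee}\otimes\Sigma^{\alpha'}\sW)$: it is $\sO_Y$ concentrated in degree $0$ when $\alpha=\alpha'$, it vanishes when $\alpha\ne\alpha'$ are incomparable or ordered the wrong way, and on the remaining pairs it is a complex concentrated in degree $0$ built from Schur functors of $E$. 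Combining this with the strong exceptional property of $(\sG_j)$ gives the required vanishing and the computation of $\Hom$ in degree zero. Fullness follows by the standard dévissage: any object in $\bD^b(\gr(d,E))$ can be split by the semiorthogonal decomposition coming from $p$ into pieces of the form $p^*(-)\otimes \Sigma^{\alpha}\sW$, and each piece lies in the generated subcategory by the inductive hypothesis.

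The main obstacle is the relative Borel--Bott--Weil calculation of $\bR p_*((\Sigma^{\alpha}\sW)^{\vee}\otimes\Sigma^{\alpha'}\sW)$: one must verify both the concentration in a single cohomological degree and the correct sign/ordering of partitions so that the collection is strong and exceptional in the specified order. Once this input is in hand, both the inductive step and the full/strong/exceptional properties of the iterated collection on $F(d_1,\dots,d_s,V)$ follow formally.
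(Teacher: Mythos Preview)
The paper does not actually prove this theorem: its entire proof is the sentence ``See \cite{Kapranov2}.'' Your proposal, by contrast, sketches the genuine argument, and your inductive strategy via the tower of relative Grassmann bundles is exactly the route Kapranov takes in \cite{Kapranov2}. So as far as comparison goes, you are supplying what the paper deliberately omits.

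That said, there is a real gap in your inductive step. You write that once $\bR p_*\bigl((\Sigma^{\alpha}\sW)^{\vee}\otimes\Sigma^{\alpha'}\sW\bigr)$ is identified as a degree-$0$ object built from Schur functors $\Sigma^{\gamma}E$, ``combining this with the strong exceptional property of $(\sG_j)$ gives the required vanishing.'' It does not. Strong exceptionality of $(\sG_j)$ controls $\Ext^{m}(\sG_j,\sG_{j'})$, but what you need is the vanishing of $\Ext^{m}\bigl(\sG_j,\sG_{j'}\otimes\Sigma^{\gamma}E\bigr)$ for the particular $\gamma$ that arise. In the flag-variety setting $E=\sW_{d_{r+1}}^{\univ}$ and $\sG_{j'}$ already contains a factor $\Sigma^{\beta_{r+1}}\sW_{d_{r+1}}^{\univ}$; tensoring by $\Sigma^{\gamma}\sW_{d_{r+1}}^{\univ}$ and decomposing via Littlewood--Richardson can produce partitions outside the box $\Gamma_{r+1}$, so you are no longer comparing members of the inductive collection. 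Kapranov handles this by a direct Borel--Bott--Weil computation on the full flag variety rather than by a purely formal induction, and your closing paragraph correctly flags this as ``the main obstacle''---but the earlier sentence makes it sound as if the obstacle has already been cleared. To make the induction go through you must either bound the $\gamma$ that appear and show the resulting objects still have no higher Ext against the $\sG_j$, or bypass the issue by computing all the relevant cohomology on $\flag(V)$ in one shot, as Kapranov does.
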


\begin{proof}
See \cite[Prop. 3.9]{Kapranov2}.
\end{proof}

\subsection{Twisted Automorphisms of General Flag Varieties}

Let $V$ be an $n$ dimensional $k$-vector space. Given $1\le d_1< \dots<d_s\le n$,
we denote by $P=F(d_1,\dots,d_s,V)$ the variety of partial flags of type
$(d_1,\dots,d_s)$ in the $n$ dimensional vector space $V$.
When we want to make the base field clear we will write $F(d_1,\dots,d_s,V)_k$ or $P_k$.
Recall
that the partial flag variety is a moduli space.
As such, there are universal exact sequences
\[
0\rightarrow \sW_{d_1}^{\univ}\hookrightarrow \cdots \hookrightarrow \sW_{d_s}^{\univ} \hookrightarrow \sO_{F}\otimes V\twoheadrightarrow \sQ_{d_1}^{\univ}\twoheadrightarrow \cdots \twoheadrightarrow \sQ_{d_s}^{\univ} \rightarrow 0.
\]
We begin by recalling the structure of $\Aut_k(F(d_1,\dots,d_s,V))$. Any $\phi\in\gl(V)$ induces
new universal exact sequences by
\[
0\rightarrow \sW_{d_1}^{\univ}\hookrightarrow \cdots \hookrightarrow \sW_{d_s}^{\univ} \rightarrow \sO_{F}\otimes V\stackrel{1\otimes\phi}\twoheadrightarrow \sQ_{d_1}^{\univ}\twoheadrightarrow \cdots \twoheadrightarrow\sQ_{d_s}^{\univ} \rightarrow 0
\]
and hence determines an automorphism of $F(d_1,\dots,d_s,V)$. This gives an inclusion
$\pgl(V)\hookrightarrow \Aut_k(F(d_1,\dots,d_s,V)$. In most cases this completely describes the
automorphism group. When $d_i+d_{s-i+1}=n$ for all $1\le i\le s$ there is one more automorphism.
Choose an isomorphism $V\cong V^\vee$.  This induces an automorphism 
$\sigma$ of $F(d_1,\dots,d_s,V)$ sending the above universal exact sequences
to 
\[
0\rightarrow (\sQ_{d_s}^{\univ})^{\vee}\hookrightarrow \cdots \hookrightarrow (\sQ_{d_1}^{\univ})^{\vee} \hookrightarrow \sO_{F}\otimes V\twoheadrightarrow (\sW_{d_1}^{\univ})^{\vee}\twoheadrightarrow \cdots (\sW_{d_s}^{\univ})^{\vee} \rightarrow 0.
\]
So in particular, 
$\sigma^*(\sW_{d_i}^{\univ})\cong (\sQ_{d_{s-i+1}}^{\univ})^{\vee}$, for all $i=1,\dots,s$.

\begin{theorem}\label{t:autFlag}
\begin{enumerate}[(i)]
\item Suppose that there exists $i$ with $d_i+d_{s-i+1}\ne n$.
 Then $\Aut_k(F(d_1,\dots,d_s,V)) = \pgl(V)$.
\item Suppose that for all $1\le i\le s$, we have $d_i+d_{s-i+1}=n$. Then 
$$\Aut_k(F(d_1,\dots,d_s,V))=\langle \pgl(V),\sigma\rangle.$$
\end{enumerate}
\end{theorem}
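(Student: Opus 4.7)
The plan is to reduce to the case of algebraically closed ground field and invoke the classical Chevalley--Demazure theorem on automorphisms of rational homogeneous varieties. Since the subgroup $\langle\pgl(V),\sigma\rangle\subseteq\Aut_k(F)$ is already $k$-defined, Galois descent reduces the statement to computing $\Aut_{\bar k}(F_{\bar k})$. Over $\bar k$ the variety $F_{\bar k}$ is the rational homogeneous space $G/P$, where $G=\pgl(V_{\bar k})$ is simple of type $A_{n-1}$ and $P$ is the standard parabolic stabilizing a flag of type $(d_1,\dots,d_s)$; combinatorially, the Levi of $P$ is generated by the simple roots $\alpha_j$ with $j\notin\{d_1,\dots,d_s\}$.

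Next I would appeal to Demazure's theorem: for a simple adjoint group $G$ and parabolic $P$, the identity component $\Aut^0(G/P)$ equals $G$ outside a short explicit list of exceptions, and the component group of $\Aut(G/P)$ equals the group of Dynkin-diagram automorphisms preserving the subset of simple roots defining $P$. In type $A_{n-1}$ the only non-trivial diagram automorphism is the involution $\alpha_j\leftrightarrow\alpha_{n-j}$, and it preserves $\{\alpha_{d_1},\dots,\alpha_{d_s}\}$ precisely when $d_i+d_{s-i+1}=n$ for all $i$. So in case (i) the diagram group is trivial and the theorem is immediate, while in case (ii) one must identify $\sigma$ with the non-trivial outer coset. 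For this I would compute the action on $\Pic(F)=\bigoplus_{i=1}^s\mathbb{Z}\cdot\det(\sW^{\univ}_{d_i})^\vee$: from $\sigma^*\sW^{\univ}_{d_i}\cong\sQ^{\univ\vee}_{d_{s-i+1}}$ together with the universal exact sequence $0\to\sW^{\univ}_{d_j}\to\sO_F\otimes V\to\sQ^{\univ}_{d_j}\to 0$, one obtains $\sigma^*\det(\sW^{\univ}_{d_i})^\vee=\det(\sW^{\univ}_{d_{s-i+1}})^\vee$, whereas $\pgl(V)$ acts trivially on $\Pic$; hence $\sigma$ is outer and supplies the missing coset.

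The main obstacle is the list of exceptions in Demazure's theorem. For type $A$ the only exception is $\gr(2,4)$, which via Pl\"ucker is a smooth $4$-dimensional quadric in $\mathbb{P}^5$; there $\Aut^0$ is $\mathrm{PO}_6$ rather than $\pgl_4$. This is reconciled by the exceptional isomorphism $A_3=D_3$, giving $\mathrm{PO}_6\cong\pgl_4\rtimes\mathbb{Z}/2$, under which the outer factor matches $\sigma$ for $s=1$, $d_1=2$, $n=4$; so even at this exceptional parameter the stated equality $\Aut=\langle\pgl(V),\sigma\rangle$ holds. Once this case is handled separately, Demazure's reduction covers all remaining parameters and finishes the proof.
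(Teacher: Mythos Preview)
The paper does not actually prove this theorem: its entire proof is a citation to Chow \cite{chow:49} in characteristic~$0$ and to Tango \cite{tango:76} in general. Your proposal therefore goes well beyond what the paper does, supplying an actual argument via Demazure's theorem on $\Aut(G/P)$.

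Your route is standard and essentially correct. Two small points are worth tightening. First, the Galois-descent step deserves one more sentence: you are using that the automorphism functor of a projective variety is representable by a group scheme of finite type, smooth in characteristic~$0$, so that the inclusion of the $k$-subgroup scheme $\langle\pgl(V),\sigma\rangle$ into $\Aut(F)$ can be tested for equality after base change to~$\bar k$. Second, your discussion of $\gr(2,4)$ is slightly off: viewed from the $A_3$ side this is \emph{not} an exception to Demazure's theorem, since $\Aut^0(\gr(2,4))=\mathrm{PSO}_6\cong\pgl_4$ already by the $A_3=D_3$ coincidence; the extra $\mathbb{Z}/2$ in $\mathrm{PO}_6$ is the component group, which is exactly the diagram involution you identify with~$\sigma$. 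So the digression is unnecessary, though your reconciliation of it is correct. With these adjustments your argument is a clean self-contained proof, whereas the paper simply defers to the literature.
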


\begin{proof}
This theorem is due to Chow in characteristic 0, see \cite{chow:49}. In arbitrary
characteristic a proof can be found in \cite{tango:76}.
\end{proof}

The scheme $F(d_1,\dots,d_s,V)$ can be defined over $\ZZ$, along with its universal exact sequences. Hence for
each field $k$ and each automorphism $\alpha$ of $F(d_1,\dots,d_s,V)$ over $\ZZ$ lifts canonically to an automorphism, also
denoted $\alpha$, of $F(d_1,\dots,d_s,V)$ over $k$.

\begin{proposition}\label{p:twistedPullback}
In the above setting we have $\alpha^*(\sW^{\univ})\cong\sW^{\univ}$.
\end{proposition}

\begin{proof}
This is because $\sW^{\univ}$ descends to a sheaf over $F(d_1,\dots,d_s)_{\ZZ}$.
\end{proof}

\begin{corollary}\label{c:twistedPullback}
Let $\phi:F(d_1,\dots,d_s,V)\rightarrow F(d_1,\dots,d_s,V)$ be a twisted automorphism.
\begin{enumerate}[(i)]
\item If $\dim V\neq d_i+d_{s-i+1}$ for some $i=1,\dots,s$,
 then $\phi^*(\sW_{d_i}^{\univ})\cong \sW_{d_i}^{\univ}$ for all $i=1,\dots,s$.
\item If $\dim V= d_i+d_{s-i+1}$ for all $i=1,\dots,s$, then either 
$\phi^*(\sW_{d_i}^{\univ})\cong \sW_{d_i}^{\univ}$ or
$\phi^*(\sW_{d_i}^{\univ})\cong (\sQ_{d_{s-i+1}}^{\univ})^{\vee}$
\end{enumerate}
\end{corollary}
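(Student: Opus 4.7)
The plan is to decompose any twisted automorphism $\phi$ of $F(d_1,\dots,d_s,V)$ as $\phi = \psi \circ \alpha$, where $\alpha$ is the automorphism of $F$ canonically induced by some field automorphism of the base (as in Proposition~\ref{p:twistedPullback}) and $\psi \in \Aut_k(F)$ is an honest $k$-linear automorphism. Once this decomposition is in hand, each factor can be analysed separately using the results just established.

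For the factor $\alpha$, Proposition~\ref{p:twistedPullback} directly yields $\alpha^*(\sW^{\univ}_{d_i}) \cong \sW^{\univ}_{d_i}$, since the universal sheaf descends all the way to $\ZZ$. It remains to compute $\psi^*(\sW^{\univ}_{d_i})$ for $\psi \in \Aut_k(F)$. When $\psi$ lies in $\pgl(V)$, I would exploit the moduli-theoretic construction of the $\pgl(V)$-action on $F$ recalled just before Theorem~\ref{t:autFlag}: the automorphism of $F$ attached to an element $\phi \in \gl(V)$ is by definition the map which replaces the given universal filtration by the one obtained by composing with $1 \otimes \phi$, and this description exhibits a tautological isomorphism $\psi^*(\sW^{\univ}_{d_i}) \cong \sW^{\univ}_{d_i}$.

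This already settles case (i), since Theorem~\ref{t:autFlag}(i) gives $\Aut_k(F) = \pgl(V)$. In case (ii), Theorem~\ref{t:autFlag}(ii) tells us $\Aut_k(F) = \langle \pgl(V), \sigma \rangle$, so either $\psi \in \pgl(V)$ (and the same argument applies) or $\psi = g \cdot \sigma$ with $g \in \pgl(V)$. In the latter case the $g^*$ piece again preserves $\sW^{\univ}_{d_i}$, so $\psi^*(\sW^{\univ}_{d_i}) \cong \sigma^*(\sW^{\univ}_{d_i})$; and by the explicit description of $\sigma$, which dualises the universal filtration using the chosen identification $V \cong V^\vee$, this pullback is precisely $\sQ^{\univ\vee}_{d_{s-i+1}}$.

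The main point requiring care is the claim that each element of $\pgl(V)$ preserves $\sW^{\univ}_{d_i}$ under pullback; this amounts to the observation that $\sW^{\univ}_{d_i}$ carries a natural $\pgl(V)$-equivariant structure arising from its moduli-theoretic definition. Granting that, and granting the factorisation $\phi = \psi \circ \alpha$ of a twisted automorphism into a linear part and a field-automorphism part, the corollary becomes a direct bookkeeping combination of Theorem~\ref{t:autFlag} and Proposition~\ref{p:twistedPullback}.
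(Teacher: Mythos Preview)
Your proposal is correct and follows exactly the same approach as the paper: factor $\phi=\psi\circ\alpha$ with $\alpha$ coming from a field automorphism and $\psi\in\Aut_k(F)$, then apply Proposition~\ref{p:twistedPullback} to $\alpha$ and Theorem~\ref{t:autFlag} (together with the explicit descriptions of the $\pgl(V)$-action and of $\sigma$) to $\psi$. The paper's proof is in fact a one-line reference to ``the above discussion,'' and your write-up simply unpacks that reference.
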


\begin{proof}
After writing $\phi = \psi \circ \alpha$ where $\psi$ is an automorphism of $F(d_1,\dots,d_s,V)$
and $\alpha$ is an automorphism of $k$ the result follows from the above discussion.
\end{proof}

\subsection{Severi-Brauer Flag varieties}
For a general introduction to generalised Severi-Brauer varieties we refer the reader to
\cite{panin}, especially sections 4 and 5.

Consider a Severi-Brauer flag variety $X=\SB(d_1,\dots,d_s,A)\rightarrow \spec(k)$ where $A$ is a $k$-central simple algebra.
Such an $X$ is an inner form of a partial flag variety. That is, there is a cartesian square of the
form

\begin{center}
\begin{tikzpicture}
  \node (TL) {$F(d_1,\dots,d_s,V)$};
  \node (BL) [below of=TL] {$\Spec(l)$};
  \node (TR) [right of=TL] {$X$};
  \node (BR) [right of=BL] {$\Spec(k),$};
  \draw[->] (TL) to node { } (TR);
  \draw[->] (TL) to node { } (BL);
  \draw[->] (BL) to node { } (BR);
  \draw[->] (TR) to node { } (BR);
\end{tikzpicture}
\end{center}
where $l/k$ is a Galois extension and the $1$-cocycle
$$
{\rm Gal}(l/k)\rightarrow {\rm Aut}(F(d_1,d_2,\ldots, d_s,V))
$$
factors through ${\rm PGL(V)}$.

\begin{theorem}
$\SB(d_1,\dots,d_s,A)$ has a locally free tilting sheaf.
\end{theorem}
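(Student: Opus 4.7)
The plan is to apply Proposition \ref{p:easy} to $Y=F(d_1,\dots,d_s,V)$ defined over the prime subfield $\QQ$ (which suffices since we work in characteristic zero) and to $X=\SB(d_1,\dots,d_s,A)$, using Kapranov's collection to supply the input tilting sheaf $\sT$. Since $F(d_1,\dots,d_s,V)$ and its tautological bundles $\sW_{d_i}^{\univ}$ descend all the way to $\ZZ$, the collection $\{\Sigma^{\alpha_1}\sW_{d_1}^{\univ}\otimes\cdots\otimes\Sigma^{\alpha_s}\sW_{d_s}^{\univ}\}_{\alpha_r\in\Gamma_r}$ of Theorem \ref{t:kapranov} is defined over $\QQ$, and Lemma \ref{l:FSEcollection} combines it into a locally free tilting sheaf $\sT$ on $Y_{\QQ}$ (locally free because Schur functors preserve local freeness).

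I then check the two hypotheses of Proposition \ref{p:easy}. For geometric finite global dimension of $\End(\sT)$: Kapranov's construction is stable under base change, so the pullback of the collection to $\bar{k}$ is again a full strong exceptional collection on $F_{\bar{k}}$, and applying Lemma \ref{l:FSEcollection} (whose global dimension bound comes from Proposition \ref{p:triangle}) over $\bar{k}$ yields finite global dimension for $\End(\sT_{\bar{k}})$.

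The main step is the Galois invariance $\phi_X(g)^*\sT_l \cong \sT_l$. Because $X=\SB(d_1,\dots,d_s,A)$ is an inner form of $F$, its defining cocycle factors through $\pgl(V)$ rather than through the full group $\langle\pgl(V),\sigma\rangle$ of Theorem \ref{t:autFlag}. Any $\psi\in\pgl(V)(l)$ lifts to some $g\in\gl(V)(l)$, and this $g$ supplies a canonical isomorphism $\psi^*\sW_{d_i}^{\univ}\cong \sW_{d_i}^{\univ}$ for every $i$ (this is the first alternative of Corollary \ref{c:twistedPullback}(ii), and it is unconditional in case (i); the outer automorphism $\sigma$, which would send $\sW_{d_i}^{\univ}$ to $\sQ_{d_{s-i+1}}^{\univ\vee}$ and thereby fail to preserve the Kapranov summands, simply does not enter an inner twist). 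Since pullback commutes with direct sums, tensor products, and Schur functors, each summand of $\sT_l$ is preserved up to isomorphism, and hence $\phi_X(g)^*\sT_l\cong \sT_l$.

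With both hypotheses of Proposition \ref{p:easy} established, the pushforward of $\sT_l$ along $Y_l\rightarrow X$ is a locally free tilting sheaf on $X$. The principal obstacle is the previous paragraph: one must understand precisely how twisted automorphisms act on the Kapranov summands and exploit that an inner twist's cocycle avoids the outer automorphism $\sigma$ that would permute summands across the Schur-functor indexing.
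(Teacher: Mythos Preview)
Your proposal is correct and follows essentially the same approach as the paper: apply Proposition \ref{p:easy} using Kapranov's tilting sheaf, the key point being that an inner cocycle (one factoring through $\pgl(V)$) fixes each $\sW_{d_i}^{\univ}$ and hence each summand of $\sT$. You have simply expanded the details (descent of the collection to the prime subfield, geometric finite global dimension via Lemma \ref{l:FSEcollection}, and the invocation of Corollary \ref{c:twistedPullback}) that the paper leaves implicit in its one-line proof.
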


\begin{proof} We can just apply (\ref{p:easy}) as it is clear that an inner automorphism preserves the sheaves in the exceptional collection.
\end{proof}

\section{Outer forms of Partial Flag Varieties.}\label{s:outer}

In this section we consider twisted forms of partial flag varieties
$$P=F(d_1,\dots,d_s,V), d_i+d_{s-i+1}=\dim(V)=n,i=1,\dots s$$
such that the associated Galois cocycle
$$
{\rm Gal}(l/k)\rightarrow {\rm Aut}_l(F)
$$
does not factor through ${\rm PGL}(V)$. The associated form $X$ of $F$ is called an outer form. $X$ can be realised as a Severi-Brauer flag variety $\SB(d_1,\dots,d_s,A)$
for $A$ a  $k$ central simple algebra equipped with a unitary involution.

In this case our method does not produce a tilting sheaf. This does not mean a tilting
sheaf does not exist although, to the best of our knowledge, no such sheaf exists in the literature
at this time.

In this setting, the partial flag variety has an extra automorphism
$\sigma$ that sends the tautological flag 
$$\sW_{d_1}^{\univ}\subseteq \dots \subseteq \sW_{d_s}^{\univ}$$
to
$$(\sQ_{d_s}^{\univ})^{\vee}\subseteq \dots \subseteq (\sQ_{d_1}^{\univ})^{\vee}$$
Let $\sE=\Sigma^{\alpha_1}(\sW_1^{\univ})\otimes\cdots\otimes \Sigma^{\alpha_s}(\sW_s^{\univ})$
be a bundle in Kapranov's exceptional collection.
Then the image under the extra automorphism is 
$\sigma^*(\sE)=\Sigma^{\alpha_1}((\sQ_{d_s}^{\univ})^{\vee})\otimes\cdots \otimes \Sigma^{\alpha_s}((\sQ_1^{\univ})^{\vee})$.

We show that when $d_i+d_{s-i+1}=n$ for all $i$,
the image of Kapranov's
exceptional collection under the automorphism group of $F(d_1,\dots,d_s,V)$
cannot be an exceptional collection, since, in particular, higher Ext groups do not vanish.
In other words, we  will produce
 bundles $\sF$ and $\sG$ in Kapranov's exceptional
collection such that
$$\Ext_P^i(\sigma^*(\sF),\sG)\ne 0$$ for some $i>0$.
 
We first discuss the methods behind our calculations.
Let $\sE=\sigma^*(\sF)^{\vee}\otimes \sG$.
Then as the exceptional collection consists of vector bundles, we have 
$$\Ext_P^*(\sigma^*(\sF),\sG)=H^*(P,\sE)$$
Also, we may factor the structure morphism  $p$ of $F(d_1,\dots,d_s,V)$
as a sequence of relative Grassmannian bundles
$$p_i:F(d_i,\dots,d_s,V)\to F(d_{i+1},\dots,d_s,V), i=1,\dots,s-1$$
with $p_s$ being the structure morphism for $F(d_s,V)=\gr(d_s,V)$.
Here we identify $p_i$ with the relative Grassmann bundle
$$p_i:\gr(d_i,\sW^{\univ}_{d_{i+1}})\to F(d_{i+1},\dots,d_s,V)$$
Then, since $p=p_s\circ \cdots \circ p_1$, we see that, in the 
derived category, we have
$$Rp_*(\sE)=R(p_s)_*\circ \cdots \circ R(p_1)_*(\sE)$$

Let $\sE_i=R(p_i)_*\circ R(p_{i-1})_*\circ \cdots R(p_{1})_*(\sE)$ for 
$i=1,\dots,s$ and $\sE_0=\sE$.
At each stage i, we wish to reexpress 
$\sE_i$ in terms of bundles of the form 
$$R(p_i)_*(\Sigma^{\alpha}(\sW_{d_{i+1}}/\sW_{d_{i}})\otimes \Sigma^{\beta}(\sW_{d_{i}}))\otimes \sE_{i+1}'$$
where $\sE_{i+1}'$ is a bundle defined over $F(d_{i+1},\dots,d_s,V)$.
To do this, we 
use exact sequences of bundles derived from the natural sequences
$$0\to \sW_{d_{i+1}}^{\univ}/\sW_{d_{i}}^{\univ}\to \sQ_{d_i}^{\univ}\to \sQ_{d_{i+1}}^{\univ}\to 0$$
We will make use  of the tools discussed in the next subsection, particularly
Proposition~\ref{grasstoflag}, relative Borel-Bott-Weil and the projection formula to 
determine $\sE_i$ from $\sE_{i-1}$
as a bundle of $F(d_{i+1},\dots,d_s,V)$. 

\subsection{Cohomological Tools}

Fix a Borel subgroup $B\subseteq {\rm GL}_n$.
The character group of $B$, $X(B)$ is the character lattice $X(T)$ of the maximal
torus $T$ and so is in bijection with $\ZZ^n$. Indeed. 
$$X(B)=X(T)=\langle \chi_i:i=1,\dots,n\rangle\cong \ZZ^n$$
where $\chi_i$ is the $i$th projection.
The dominant Weyl chamber $C^+$ consists of sequences $\chi=(a_1,a_2,\ldots, a_n)$
with $a_1\ge a_2\ge \ldots \ge a_n$. The irreducible representations
of $GL(V)$ are given by $\Sigma^{\chi}(V)$ for each $\chi\in C^+$
 where  $\Sigma^{\chi}$ is the corresponding Schur functor.
Note that $(\Sigma^{\chi}(V))^{\vee}=\Sigma^{-\chi}(V)$ for $\chi\in C^+$
where $-\chi=(-a_n,\dots,-a_1)\in C^+$ if $\chi=(a_1,\dots,a_n)\in C^+$.
There is an action of the Weyl group $S_n$ given by permutation of
letters.
We denote half the sum of the positive roots by $\rho=(n,n-1,\ldots,1)$.
There is a modified action of the Weyl group $S_n$ on the weights $\ZZ^n$ given
by 
\[
\sigma . \lambda = \sigma(\lambda + \rho) - \rho.
\]

Let $\sV$ be a vector bundle of rank $n$ over a scheme $X$ and 
$\pi:\flag(\sV)\to X$ be the relative full flag bundle over $X$.
Note that there is a $\gl_n$-torsor $T(\sV)=\Isom(\sO_X^n,\sV)$
over $X$.  The fibre over a point $x\in X$ is the set of frames at $x\in X$,
$\Isom(k^n,\sV_x)$ on which $\gl_n$ acts freely by precomposition.
Then $T(\sV)/B\cong \flag(\sV)$.
Each character of $B$, $\chi\in \ZZ^n$
produces a line bundle 
\[
\sO_F(\chi) \cong T(\sV)\times_{B,\chi} {\mathbb G}_m
\]
where $F=\flag(\sV)$.
If $\chi=(\beta_1,\beta_2,\ldots, \beta_n)$ then 
\[
\sO_F(\chi)\cong \sW_1^{-\beta}\otimes (\sW_2/\sW_1)^{-\beta_2} \otimes \ldots \otimes
(\sV/\sW_{n-1})^{-\beta_n}.
\]

The  Borel-Bott-Weil Theorem 
determines $R\pi_*(\sO_F(\chi))$ for $\chi\in C_+$.

\begin{theorem} ( Borel-Bott-Weil)\label{bbw}
Let $\sV$ be a vector bundle over a scheme $X$ and $\pi:\flag(\sV)\to X$
be the relative full flag bundle over $X$.
Let 
\[
0=\sW_0\subseteq \sW_1\subseteq \sW_2\subseteq \ldots \subseteq \sW_n = 
\sV.
\]
be a universal flag on $F=\flag(\sV)$.
For $\beta=(\beta_1,\dots,\beta_n)\in \ZZ^n$, we define a corresponding line
bundle on $\flag(\sV)$
$$\sO_F(\beta)=\sW_1^{\otimes -\beta_1}\otimes (\sW_2/\sW_1)^{\otimes -\beta_2}\cdots \otimes
(\sV/\sW_{n-1})^{-\beta_n}$$
Then for $\chi\in \ZZ^n$:
\begin{enumerate}
\item 
If there exists a non-identity $w\in S_n$ such that
$w\cdot \chi=\chi$ (or equivalently if there is a repeat in $\chi+\rho$)
then $R^i\pi_*(\sO_F(\chi))=0$ for all $i$.
\item Otherwise, there exists a unique $w\in S_n$ such that $\alpha=w\cdot \chi\in C^+$.
In this case, if $i\ne l(w)$, we have $R^i\pi_*(\sO_F(\chi))=0$ and 
$R^{l(w)}\pi_*(\sO_F(\chi))=\Sigma^{\alpha}(\sV)^{\vee}=\Sigma^{-\alpha}(\sV)$.
\end{enumerate}
\end{theorem}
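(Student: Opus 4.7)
The plan is to reduce the relative statement to the classical absolute Borel-Bott-Weil theorem on $\flag(k^n)$ and then invoke its standard proof via a tower of $\PP^1$-bundles.

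First I would pass to the absolute case. Since $R\pi_*$ commutes with flat base change and $\sV$ is Zariski-locally trivial, it suffices to check the statement over an open $U \subseteq X$ on which $\sV$ is trivial. Over such a $U$ the flag bundle becomes $U \times_k \flag(k^n)$, and because $\sO_F(\chi)$ is by definition $T(\sV) \times^{B,\chi} \Gm$ for the $\gl_n$-torsor $T(\sV)$, it restricts to the exterior product $\sO_U \boxtimes \sO(\chi)$. The projection formula then identifies $R\pi_*(\sO_F(\chi))|_U$ with $\sO_U \otimes_k H^*(\flag(k^n), \sO(\chi))$. The $\gl_n$-equivariance of the absolute line bundle $\sO(\chi)$ means that a change of trivialization acts on this cohomology exactly as $\gl_n$ acts on the corresponding Schur module, so the local isomorphisms glue into the canonical global identification with $\Sigma^{-\alpha}(\sV)$ (or with zero) claimed in the theorem.

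For the absolute statement on $\flag(k^n)$ I would factor the structure map as a tower of $\PP^1$-bundles $\flag(k^n) \to F_1 \to \cdots \to \Spec(k)$ coming from the minimal parabolics, and induct on the length of the Weyl element $w$ sending $\chi$ to the dominant chamber. If $\chi + \rho$ has a repeated entry in consecutive positions $i, i+1$, then the restriction of $\sO(\chi)$ to the $i$-th $\PP^1$-fibre is $\sO_{\PP^1}(-1)$, whose derived pushforward is zero, and iterating Leray gives $R\pi_*\sO_F(\chi) = 0$. Otherwise, when $\chi + \rho$ has a descent at some $(i, i+1)$, a direct calculation along that $\PP^1$-fibre combined with relative Serre duality produces an isomorphism
\[
R\pi_*(\sO_F(\chi)) \cong R\pi_*(\sO_F(s_i \cdot \chi))[-1],
\]
which strictly lowers the length of the needed Weyl element while shifting cohomological degree by one. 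The induction terminates at a dominant weight, where the computation is an instance of the classical identification of $H^0$ of a dominant line bundle on the full flag variety with the dual Schur module.

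The main obstacle is the bookkeeping around the dot action: matching the cohomological shifts, the sign in $\Sigma^{-\alpha}$, and the Serre-duality twist on each $\PP^1$-fibre so that the recursive step agrees on the nose with the standard Weyl action. In addition, the reduction step requires some care to glue the local trivializations into a canonical $\gl_n$-equivariant identification rather than just a local isomorphism. Since the argument is entirely classical, in practice we simply refer the reader to Demazure's standard proof of Borel-Bott-Weil.
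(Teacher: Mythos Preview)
The paper does not prove this statement at all; it simply states the relative Borel--Bott--Weil theorem as a classical result and uses it as a black box throughout Section~5. So there is nothing to compare your argument against.

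That said, your sketch is a reasonable outline of the standard Demazure proof. The reduction to the absolute case via local triviality and flat base change is correct, and the inductive $\PP^1$-bundle argument you describe is exactly Demazure's approach. Your own caveat about the bookkeeping around the dot action, the Serre duality twist, and the gluing of local identifications into a canonical equivariant one is well placed: these are precisely the points where a full written proof requires care, and your sketch does not carry them out. Since the paper itself just cites the result, your final sentence---referring the reader to the standard literature---is in fact all that is needed here.
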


\begin{proof}
This result is well known and there are many reference. Two such references are
\cite[page 392]{harris} and \cite[page 217]{jantzen}
\end{proof}

We will  be interested in relative Grassmann bundles
over a scheme $X$.  Let $\sV$ be a bundle over $X$ and let
$p:\gr(k,\sV)\to X$ be the relative Grassmann bundle and $\pi:\flag(\sV)\to X$.
We wish to express the higher derived functors of $p$ for certain bundles
over $\gr(k,\sV)$ in terms of the higher derived functors of $\pi$ for 
certain line bundles over $\flag(\sV)$.
This proposition follows  from the discussion in \cite{Kapranov}.
\begin{proposition}\label{grasstoflag}
Suppose we have decreasing sequences 
\[
\alpha = (\alpha_1\ge \alpha_2\ge \ldots \ge \alpha_k)\quad\text{and}\quad
\beta = (\beta_1\ge \beta_2 \ge \ldots \ge \beta_{n-k}).
\]
Let $\sV$ be a bundle on a scheme $X$, and let
$p:\gr(k,\sV)\to X$ be the relative Grassmann bundle on $\sV$ and let
 $\pi:\flag(\sV)\to X$ be the full flag variety.
Let $\sW$ be the tautological subbundle on $\gr(k,\sV)$.  
Then there is a cartesian  diagram
\begin{center}
\begin{tikzpicture}
\node (TL) {$\flag(\sV)$};
  \node (BL) [below of=TL] {$\flag(\sW)$};
  \node (TR) [node distance=3.4cm,right of=TL] {$\flag(\sV/\sW)$};
  \node (BR) [node distance=3.4cm,right of=BL] {$\gr(k,\sV)$};
  \draw[->] (TL) to node {$ $} (TR);
  \draw[->] (TL) to node {$ $} (BL);
  \draw[->] (BL) to node {$q_1$} (BR);
  \draw[->] (TR) to node {$q_2$} (BR);
\end{tikzpicture}
\end{center}
Further $R\pi_*(\sO_{F}(-\alpha_k,\dots,-\alpha_1,-\beta_{n-k},\dots,-\beta_1)
=Rp_*(\Sigma^{\alpha}(\sW)\otimes \Sigma^{\beta}(\sV/\sW))$.
\end{proposition}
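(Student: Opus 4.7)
The plan is in two steps: first establish the cartesian diagram functorially, then deduce the cohomology formula by combining the projection formula with relative Borel--Bott--Weil (Theorem \ref{bbw}).

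For the cartesian square, I would argue on $T$-valued points. A point of $\flag(\sV)$ over $T$ is a full flag $V_1 \subset V_2 \subset \cdots \subset V_n = \sV_T$, and such a flag is uniquely specified by its $k$-th step $V_k$ (a $T$-point of $\gr(k,\sV)$), a full flag of $V_k$ (a $T$-point of $\flag(\sW)$ over that), and a full flag of $\sV_T/V_k$ (a $T$-point of $\flag(\sV/\sW)$ over that, which reconstructs the remaining $V_{k+j}$ as preimages). This is precisely the universal property of $\flag(\sW)\times_{\gr(k,\sV)}\flag(\sV/\sW)$, so the square is cartesian.

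For the cohomology formula, let $\pi':\flag(\sV)\to \gr(k,\sV)$ be the map sending a flag to its $k$-th step, so $\pi=p\circ\pi'$; write $f,g$ for the top and left arrows in the cartesian square. Because $\sW_k=\sW$ in the factorization, the defining formula $\sO_F(c_1,\dots,c_n) = \sW_1^{-c_1}\otimes\cdots\otimes(\sV/\sW_{n-1})^{-c_n}$ splits into two blocks: the line bundle $\sO_F(-\alpha_k,\dots,-\alpha_1,-\beta_{n-k},\dots,-\beta_1)$ on $\flag(\sV)$ is isomorphic to $g^*\sO_{\flag(\sW)}(-\alpha_k,\dots,-\alpha_1)\otimes f^*\sO_{\flag(\sV/\sW)}(-\beta_{n-k},\dots,-\beta_1)$. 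Combining the projection formula with flat base change along the cartesian square then yields
\[
R\pi'_*\,\sO_F(\cdots)\;\cong\; R(q_1)_*\sO_{\flag(\sW)}(-\alpha_k,\dots,-\alpha_1)\;\otimes^{L}\;R(q_2)_*\sO_{\flag(\sV/\sW)}(-\beta_{n-k},\dots,-\beta_1).
\]
Since $\alpha$ and $\beta$ are decreasing, both weights are dominant, so Theorem \ref{bbw} applies with $w=\mathrm{id}$ and $\ell(w)=0$; converting via the convention $\Sigma^\chi(\sV)^\vee=\Sigma^{-\chi}(\sV)$ from the Notations section identifies the two pushforwards with $\Sigma^\alpha(\sW)$ and $\Sigma^\beta(\sV/\sW)$ respectively, each concentrated in degree zero. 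The derived tensor then collapses to an ordinary tensor, and applying $Rp_*$ together with $R\pi_* = Rp_*\circ R\pi'_*$ closes the argument.

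The main subtlety I expect is purely notational: carefully matching the reverse-and-negate convention used for duals of Schur functors against the ordering in which the two blocks of the weight $(-\alpha_k,\dots,-\alpha_1,-\beta_{n-k},\dots,-\beta_1)$ are written, so that Borel--Bott--Weil produces $\Sigma^\alpha(\sW)$ rather than its dual. The geometric inputs (the fibre-product description and relative Borel--Bott--Weil) and the homological tools (projection formula, flat base change) are standard.
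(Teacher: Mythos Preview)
Your proposal is correct and follows essentially the same route as the paper: establish the fibre-product description of $\flag(\sV)$ over $\gr(k,\sV)$ from its moduli interpretation, split the line bundle as an external tensor product along the two factors, push forward to the Grassmannian using a K\"unneth-type argument (the paper invokes the K\"unneth formula directly, while you phrase it as projection formula plus flat base change, which amounts to the same thing), apply relative Borel--Bott--Weil to each factor with the dominant weights $-\alpha$ and $-\beta$, and then compose with $Rp_*$ via $R\pi_*=Rp_*\circ Rq_*$.
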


\begin{proof}
The statement on the cartesian diagram follows immediately from the description of the flag varieties
as moduli spaces.

Let $\sL_1=\sO_{F_1}(-\alpha_1)$ and $\sL_2=\sO_{F_2}(-\alpha_2)$ be 
line bundles on $F_1=\flag(\sW)$ and $F_2=\flag(\sV/\sW)$ respectively.
By  Borel-Bott-Weil,
we see that $(q_1)_*(\sL_1)=(q_1)_*(\sO_{F_1}(-\alpha))=(\Sigma^{-\alpha}(\sW))^{\vee}
=\Sigma^{\alpha}(\sW)$  and $R^i(q_1)_*(\sL_1)=0$ for $i>0$ since $-\alpha$ is dominant if $\alpha$ is dominant.
Similarly, we see that $(q_2)_*(\sL_2)=(q_2)_*(\sO_{F_2}(-\beta))=\Sigma^{\beta}(\sV/\sW)$.
Since 
\[
0=\sW_0\subseteq \sW_1\subseteq \sW_2\subseteq \ldots \subseteq \sW_n = 
\sV.
\]
is a universal flag for the relative full flag bundle $F=\flag(\sV)$
with projection $q:F=\flag(\sV)\to G=\gr(k,\sV)$, we see that $q=q_1\times_G q_2$ and $\flag(\sV)=
\flag(\sW)\times_G\flag(\sV/\sW)$.
Then $\sL=\sL_1\otimes \sL_2=\sO_{\flag(\sW)}(-\alpha)\otimes_G \sO_{\flag(\sV/\sW)}(-\beta)=\sO_{\flag(\sV)}(-\alpha_k,\dots,-\alpha_1,-\beta_{n-k},\dots,-\beta_1)$.
By the  K\"unneth formula, $q_*(\sL_1\otimes \sL_2)=(q_1)_*(\sL_1)\otimes (q_2)_*(\sL_2)=\Sigma^{\alpha}(\sW)\otimes \Sigma^{\beta}(\sV/\sW)$
and $R^iq_*(\sL_1\otimes \sL_2)=0$ for $i>0$.

Now $\pi=p\circ q$ where $p:\gr(k,\sV)\to X$ and $\pi:\flag(\sV)\to X$.
By the Leray spectral sequence, we see that
$R\pi_*(\sL)=Rp_*\circ Rq_*(\sL)=Rp_*(\Sigma^{\alpha}(\sW)\otimes \Sigma^{\beta}(\sV/\sW))$ as required.

\end{proof}

\begin{corollary}
Let $\sV$ be a bundle on $X$ and  $p:\gr(k,\sV)\to X$,
the relative Grassmann bundle.  Set $G=\gr(k,\sV)$. Then
$Rp_*(\sO_G)=\sO_X$.
\end{corollary}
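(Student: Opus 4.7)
The plan is to apply Proposition \ref{grasstoflag} with the trivial partitions $\alpha = (0,\dots,0)$ of length $k$ and $\beta = (0,\dots,0)$ of length $n-k$. In that case $\Sigma^{\alpha}(\sW) \otimes \Sigma^{\beta}(\sV/\sW) = \sO_G$ because the Schur functor attached to the empty partition is the trivial functor, while on the flag variety side the weight $(-\alpha_k,\dots,-\alpha_1,-\beta_{n-k},\dots,-\beta_1)$ is the zero weight, so $\sO_F(0,\dots,0) = \sO_F$. Proposition \ref{grasstoflag} therefore yields
\[
Rp_*(\sO_G) = R\pi_*(\sO_F),
\]
where $\pi : \flag(\sV) \to X$ is the relative full flag bundle.

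The remaining step is to compute $R\pi_*(\sO_F)$ by Borel-Bott-Weil (Theorem \ref{bbw}) applied to the character $\chi = 0 \in \ZZ^n$. Since $\chi + \rho = \rho = (n,n-1,\dots,1)$ has pairwise distinct entries, case (2) of the theorem applies; moreover the identity $w = e \in S_n$ already satisfies $e \cdot 0 = 0 \in C^+$, with length $l(e) = 0$. Hence $R^i \pi_*(\sO_F)$ vanishes for $i > 0$ and
\[
\pi_*(\sO_F) = \Sigma^{0}(\sV)^\vee = \sO_X,
\]
because the Schur functor attached to the zero weight is the trivial representation. Chaining these two identifications gives $Rp_*(\sO_G) = \sO_X$.

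There is no serious obstacle to overcome: this is essentially a direct specialisation of the two preceding results. The only points to verify are the edge-case bookkeeping in Proposition \ref{grasstoflag}, namely that the zero sequences are admissible as weakly decreasing sequences of the required lengths and that the corresponding line bundle on $\flag(\sV)$ really is $\sO_F$, and the analogous sanity check for Theorem \ref{bbw} that $\rho$ is regular dominant. Both are immediate from the definitions recalled in the excerpt.
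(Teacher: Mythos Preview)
Your proof is correct and follows exactly the same approach as the paper: specialize Proposition~\ref{grasstoflag} to the zero partitions so that the bundle on $G$ is $\sO_G$ and the line bundle on $\flag(\sV)$ is $\sO_F(0)$, then apply relative Borel--Bott--Weil at the zero weight to obtain $\Sigma^0(\sV)=\sO_X$.
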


\begin{proof}
$\sO_G=\Sigma^0(\sW)\otimes \Sigma^0(\sV/\sW)$ where $\sW$ is the tautological
bundle on $G$ and $\sV/\sW$ is the tautological quotient bundle.
Then by the proposition and relative Borel-Bott-Weil, 
$$Rp_*(\sO_G)=R\pi_*(\sO_F(0))=\Sigma^0(\sV)=\sO_X$$
where $F=\flag(\sV)$.
\end{proof}

Recall also the projection formula:
\begin{proposition}
Let $p:Y\to X$ be a morphism of schemes. Let $\sE$ be a bundle on $Y$ 
and let $\sF$ be a bundle on $X$.
Then $Rp_*(\sE\otimes p^*(\sF))=Rp_*(\sE)\otimes \sF$.
\end{proposition}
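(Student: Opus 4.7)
The plan is to reduce to the trivial case $\sF = \sO_X$ by working locally on $X$ and exploiting the local freeness of $\sF$. First, I would construct the canonical morphism
\[
\mu : Rp_*(\sE) \otimes \sF \longrightarrow Rp_*(\sE \otimes p^*\sF)
\]
by adjunction: the natural map $Rp_*(\sE) \to Rp_*(\sE \otimes p^*\sF \otimes p^*\sF^\vee)$ combined with the unit/counit of the $(p^*,Rp_*)$-adjunction applied to $\sF$ yields $\mu$. Equivalently, on representatives, one has an obvious map of complexes $p_*(\sE) \otimes \sF \to p_*(\sE \otimes p^*\sF)$ that on local sections sends $s \otimes f$ to $s \otimes p^*f$, and this extends to the derived level since $\sF$ is flat (being locally free).

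Because the assertion is local on the target $X$, and both functors commute with restriction to open subsets, I may assume $X$ is affine (or simply shrink $X$) so that $\sF \cong \sO_X^{\oplus n}$ is free of some finite rank $n$. In this case both sides of the desired identity are compatible with finite direct sums: $Rp_*$ commutes with finite direct sums (being a triangulated functor), and $(-)\otimes \sF$ distributes over finite direct sums in $\sE$ likewise. Therefore it suffices to prove the statement when $\sF = \sO_X$, and then the identity becomes $Rp_*(\sE \otimes p^*\sO_X) = Rp_*(\sE) \otimes \sO_X$, which is trivially true. Checking that $\mu$ matches the identity in this case is routine.

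There is essentially no serious obstacle here; the only mildly delicate point is verifying that the canonical map $\mu$ is compatible with the local trivializations of $\sF$, i.e.\ that the isomorphism one gets on an affine open where $\sF$ is free is independent of the choice of trivialization and hence glues. This follows because $\mu$ is defined functorially in $\sF$, so any two local trivializations differ by an automorphism of $\sO_X^{\oplus n}$ under which $\mu$ is manifestly equivariant. Consequently the locally defined isomorphisms patch to a global one, completing the proof.
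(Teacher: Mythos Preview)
Your argument is the standard and correct proof of the projection formula in the locally free case: construct the canonical comparison map functorially, check it is an isomorphism locally on $X$ where $\sF$ trivializes, and reduce to $\sF=\sO_X$. There is nothing wrong here.

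The paper, however, does not prove this proposition at all; it is introduced with the phrase ``Recall also the projection formula'' and stated without proof as a well-known background result. So there is no comparison of approaches to make beyond noting that you have supplied a complete (and standard) argument where the paper simply invokes the statement.
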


Lastly, we recall a filtration on exterior algebra bundles determined by
a short exact sequence which will later prove helpful:

\begin{proposition}
Let $0\to \sF'\to \sF\to \sF''\to 0$ be an exact sequence of locally free 
sheaves on a scheme $X$.
Then for any $r$, there is a finite filtration of $\bigwedge^r(\sF)$,
$$\bigwedge^r(\sF)=F^0\supseteq F^1\supseteq F^2\supseteq \dots \supseteq
F^r\supseteq F^{r+1}=0$$
with quotients
$$F^p/F^{p+1}\cong \bigwedge^p(\sF')\otimes \bigwedge^{r-p}(\sF'')$$
for each p.
\end{proposition}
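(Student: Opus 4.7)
The plan is to define the filtration via images of natural wedge-product maps, construct a comparison morphism from $\bigwedge^p(\sF') \otimes \bigwedge^{r-p}(\sF'')$ onto each graded piece, and verify it is an isomorphism by reducing to an affine open on which the short exact sequence splits.

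Concretely, I would set $F^p \subseteq \bigwedge^r(\sF)$ equal to the image of the multiplication morphism
\[
\mu_p : \bigwedge^p(\sF') \otimes \bigwedge^{r-p}(\sF) \to \bigwedge^r(\sF)
\]
induced by the inclusion $\sF' \hookrightarrow \sF$, with the convention $F^p = 0$ for $p > r$. The containments $F^{p+1} \subseteq F^p$, together with the boundary identifications $F^0 = \bigwedge^r(\sF)$ and $F^{r+1} = 0$, are immediate from this definition, since any local section expressible as an $(r-p-1)$-fold wedge followed by $(p+1)$ factors in $\sF'$ can equally be written as an $(r-p)$-fold wedge followed by $p$ factors in $\sF'$.

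Next, I would identify the graded pieces. The kernel of the natural surjection $\bigwedge^{r-p}(\sF) \twoheadrightarrow \bigwedge^{r-p}(\sF'')$ is exactly the image of the wedge map $\sF' \otimes \bigwedge^{r-p-1}(\sF) \to \bigwedge^{r-p}(\sF)$, and for any local sections $\omega' \in \bigwedge^p(\sF')$, $\phi \in \sF'$, $\eta \in \bigwedge^{r-p-1}(\sF)$, the element $\omega' \wedge \phi \wedge \eta \in \bigwedge^r(\sF)$ has $p+1$ factors in $\sF'$ and therefore lies in $F^{p+1}$. Hence $\mu_p$ descends to a surjective morphism
\[
\psi_p : \bigwedge^p(\sF') \otimes \bigwedge^{r-p}(\sF'') \to F^p/F^{p+1}.
\]

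Finally, to show $\psi_p$ is an isomorphism I would argue locally. Since the statement is local on $X$ and all three sheaves are locally free of finite rank, I may restrict to an affine open on which $\sF'$, $\sF$, $\sF''$ are free and the sequence splits. Under a chosen splitting $\sF \cong \sF' \oplus \sF''$, the classical decomposition $\bigwedge^r(\sF' \oplus \sF'') \cong \bigoplus_{j=0}^r \bigwedge^j(\sF') \otimes \bigwedge^{r-j}(\sF'')$ identifies $F^p$ with $\bigoplus_{j \ge p} \bigwedge^j(\sF') \otimes \bigwedge^{r-j}(\sF'')$, making the surjection $\psi_p$ visibly an isomorphism. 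The only mild conceptual point, which I expect to be the one requiring a moment's care, is checking that the globally defined $F^p$ really does restrict to the filtration obtained from a local splitting; but because $\mu_p$ is a canonical map defined by the universal property of exterior powers, it commutes with restriction, and the local calculation suffices.
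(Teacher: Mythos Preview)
Your argument is correct: defining $F^p$ as the image of $\mu_p$, producing the surjection $\psi_p$ by noting that $\mu_p$ carries $\bigwedge^p(\sF')\otimes\ker\bigl(\bigwedge^{r-p}(\sF)\to\bigwedge^{r-p}(\sF'')\bigr)$ into $F^{p+1}$, and then checking $\psi_p$ is an isomorphism locally via a splitting is the standard route, and each step is sound. The paper itself does not supply a proof at all; it simply cites Exercise II.5.16(c) of Hartshorne, so your write-up is in fact more than the paper provides and is exactly the argument that exercise is asking for.
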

\begin{proof}
Exercise II 5.16(c) in Hartshorne.
\end{proof}

\begin{corollary}\label{extalgses}
Let $0\to \sF'\to \sF\to \sF''\to 0$ be an exact sequence of locally free 
sheaves on a scheme $X$ where $\sF''$ has rank 1. 
Then we obtain an exact sequence 
$$0\to \bigwedge^r(\sF')\to \bigwedge^r(\sF)\to \bigwedge^{r-1}(\sF')\otimes \sF''\to 0$$
\end{corollary}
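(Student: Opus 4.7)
The plan is to read off the short exact sequence directly from the filtration provided by the preceding proposition. Set $\sE = \bigwedge^r(\sF)$, and let
\[
\sE = F^0 \supseteq F^1 \supseteq \cdots \supseteq F^r \supseteq F^{r+1} = 0
\]
be the filtration whose successive quotients are $F^p/F^{p+1} \cong \bigwedge^p(\sF') \otimes \bigwedge^{r-p}(\sF'')$.

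The key observation is that the hypothesis $\mathrm{rank}(\sF'') = 1$ forces $\bigwedge^{r-p}(\sF'') = 0$ whenever $r - p \geq 2$, i.e.\ whenever $p \leq r-2$. Hence $F^p/F^{p+1} = 0$ for $0 \leq p \leq r-2$, which gives $F^0 = F^1 = \cdots = F^{r-1}$. The two remaining quotients are
\[
F^{r-1}/F^{r} \cong \bigwedge^{r-1}(\sF') \otimes \sF'', \qquad F^r/F^{r+1} = F^r \cong \bigwedge^r(\sF') \otimes \sO_X = \bigwedge^r(\sF'),
\]
using $\bigwedge^1(\sF'') = \sF''$ and $\bigwedge^0(\sF'') = \sO_X$.

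Thus the single non-trivial layer of the filtration collapses to the short exact sequence
\[
0 \to \bigwedge^r(\sF') \to \bigwedge^r(\sF) = F^0 = F^{r-1} \to \bigwedge^{r-1}(\sF') \otimes \sF'' \to 0,
\]
which in particular yields the exact sequence asserted in the corollary. There is no real obstacle here; the only thing to be mildly careful about is the bookkeeping of indices, in particular the identification of $\bigwedge^0$ and $\bigwedge^1$ of the rank-one bundle $\sF''$ to ensure that the two surviving quotients are exactly the ones stated.
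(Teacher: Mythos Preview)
Your proof is correct and follows essentially the same argument as the paper: both use the filtration from the preceding proposition, observe that the rank-one hypothesis on $\sF''$ kills all quotients $F^p/F^{p+1}$ for $p\le r-2$, and then read off the desired sequence from $0\to F^r\to F^{r-1}\to F^{r-1}/F^r\to 0$.
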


\begin{proof}
From the proposition, there is a filtration on $\bigwedge^r(\sF)$ given by
$$\bigwedge^r(\sF)=F^0\supseteq F^1\supseteq F^2\supseteq \dots \supseteq
F^r\supseteq F^{r+1}=0$$
with quotients
$$F^p/F^{p+1}\cong \bigwedge^p(\sF')\otimes \bigwedge^{r-p}(\sF'')$$
for each $p$.
But in our case, $F^p/F^{p+1}$ vanishes for all $p=0,\dots,r-2$ since $\bigwedge^{r-p}(\sF'')=0$. So we have $\bigwedge^r(\sF)=F^0=\dots=F^{r-1}$ and $F^{r+1}=0$. 
This means that the natural exact sequence 
$$0\to F^r\to F^{r-1}\to F^{r-1}/F^r\to 0$$
gives the exact sequence
$$0\to \bigwedge^r(\sF')\to \bigwedge^r(\sF)\to \bigwedge^{r-1}(\sF')\otimes \sF''\to 0$$
as required.
\end{proof}

\subsection{Non-vanishing Ext groups}\label{ss:non}
In this subsection we consider a partial flag variety $P=F(d_1,d_2,\ldots, d_s,V)$ such that $d_i+d_{s-i+1}=n=\dim(V), i=1,\dots,s$. Such a flag variety
has an extra automorphism $\sigma$. 

We will prove the following proposition.

\begin{proposition} Suppose $d_i+d_{s-i+1}=n=\dim(V), i=1,\dots,s$. 
Let $l/k$ be a finite Galois extension.
Let $X$ be an outer $l/k$ form of $P=F(d_1,\dots,d_s,V)$ where $l/k$ is a finite Galois extension.
Let $\sT$ be the tilting sheaf on $P$ corresponding to Kapranov's exceptional 
collection for $P$. Let $\sT_l=\pi_l^*(\sT)$ be the pullback of $\sT$ under $\pi_l^P:P_l\to P$.  Then the pushforward $p_*(\sT_l)$ under the projection
$p:Y_l\to X$ is not a tilting sheaf on $X$.
\end{proposition}

\begin{proof}
We will divide our argument into 3 cases on the parameters $d_i,i=1,\dots,s$
in the partial flag variety $P=F(d_1,\dots,d_s,V)$.
For each case, we will show that 
there exists sheaves
$\sE$ and $\sF$ in Kapranov's exceptional collection so that
$$\Ext_P^i(\sigma^*(\sF),\sG)\ne 0$$ for some $i>0$. 
Then, by the discussion in Section~\ref{ss:descent},  for the tilting sheaf $\sT$ on $P$ produced from Kapranov's exceptional collection
via Lemma~\ref{l:FSEcollection}, the pushforward $p_*(\sT_l)$ has  non-trivial higher self-extensions and so is not a tilting sheaf on the outer form $X$.

We will simplify notation a little.
The universal subbundle 
$\sW_{d_i}^{\univ}$ and universal quotient bundles $\sQ_{d_i}^{\univ}$ by $\sW_{d_i}$ and $\sQ_{d_i}$.  We will also implicitly identify these bundles $\sW_{d_j},\sQ_{d_j}$
over $F(d_j,\dots,d_s,V)$ with their pullbacks to $F(d_i,\dots,d_s,V)$ where
$i<j$. Given a partition, we will often drop trailing
zeroes. For example the partition $(2)$ is really the partition $(2,0,\ldots,0)$. Further 
repeated entries in a partition will be indicated by superscripts, for example, $(1^d)$ is the
partition $(1,1,\ldots, 1)$ repeated $d$-times.

The construction is divided into three cases. 

\noindent\textbf{Case 1: $d_1\ge 2$}

Note that as we have assumed that $d_i+d_{s-i+1}=n$ for all $i=1,\dots,s$,
this implies that $d_{s}=n-d_1$.
Take $\sF=\Sigma^{(1^{d_1-1})}(\sW_{d_1})$ and $\sG=\Sigma^{(2)}(\sW_{d_{s}})$.
Note that 
$$\sF=\Sigma^{\alpha_1}(\sW_{d_1})\otimes \cdots \otimes \Sigma^{\alpha_s}(\sW_{d_s})$$
$$\sG=\Sigma^{\beta_1}(\sW_{d_1})\otimes \cdots \otimes \Sigma^{\beta_s}(\sW_{d_s})$$
where $\alpha_1=(1^{d_1-1})$ and $\alpha_i=0$ for all $i\ne 1$
and $\beta_s=(2)$, $\beta_i=0$ for all $i\ne s$.
Since $d_1-1\le d_1$ and $n-d_s=d_1\ge 2$, these vector bundles are part of the exceptional collection
constructed in (\ref{t:kapranov}).
Then 
$$\Ext^*_P(\sigma^*(\sF),\sG)=H^*(P,\Sigma^{(1^{d_1-1})}(\sQ_{d_s})\otimes \Sigma^{(2)}(\sW_{d_s}))$$

We factor the structure morphism of $P=F(d_1,\dots,d_s,V)$
into the projection $q:F(d_1,\dots,d_s,V)\to \gr(d_s,V)$ and the structure morphism $p$ for 
$\gr(d_s,V)$.
Then 
\begin{eqnarray*}H^*(P,\Sigma^{(1^{d_1-1})}(\sQ_{d_s})\otimes \Sigma^{(2)}(\sW_{d_s}))&=&
Rp_*(Rq_*(\Sigma^{(1^{d_1-1})}(\sQ_{d_s})\otimes \Sigma^{(2)}(\sW_{d_s}))\\
=Rp_*(\Sigma^{(1^{d_1-1})}(\sQ_{d_s})\otimes \Sigma^{(2)}(\sW_{d_s}))
\end{eqnarray*}
where the last line follows from the projection formula as our bundle
is defined over $\gr(d_s,V)$. Then for the structure morphism $\pi$ of
$\flag(V)$, we may apply Proposition~\ref{grasstoflag} to obtain
$$R\pi_*(\sO_{\flag(V)}(\chi))=H^*(\flag(V),\sO_{\flag(V)}(\chi))$$
where $\chi=(0,\dots,0,-2,0,-1,\dots,-1)$ has the $-2$ in the $d_s$th spot,
0 in the $d_s+1$ spot and the remaining entries -1, since $\pi$ is the structure morphism of $\flag(V)$.
For the simple transposition $w=(d_s,d_s+1)\in S_n$ of length 1, we see that 
$\alpha=w\cdot \chi=(0,\dots,0,-1,\dots,-1)$ is dominant
where the last $d_1+1$ entries are $-1$.  By Borel-Bott-Weil, we obtain
$$H^1(\flag(V),\sO_{\flag(V)}(\chi))=\Sigma^{(1^{d_1+1})}(V)$$
So following the chain of isomorphisms, we find that 
$$\Ext^1_P(\sigma^*(\sF),\sG)=\Sigma^{(1^{d_1+1})}(V)\ne 0$$
so that we have found a bundle of Kapranov's exceptional collection
and the image of a bundle of Kapranov's exceptional collection which
have non-trivial Ext group.

\noindent\textbf{Case 2: $d_1=1, d_2\ge 3$}

Note that $d_{s-1}\ge d_2\ge 3$ and by the symmetry assumption, we
have $d_s=n-1$ and $d_{s-1}=n-d_2$.
Take $\sF=\Sigma^{(1^{d_2-1})}(\sW_{d_2})$ and $\sG=\Sigma^{(2)}(\sW_{d_{s-1}})$.
Note that 
$$\sF=\Sigma^{\alpha_1}(\sW_{d_1})\otimes \cdots \otimes \Sigma^{\alpha_s}(\sW_{d_s})$$
$$\sG=\Sigma^{\beta_1}(\sW_{d_1})\otimes \cdots \otimes \Sigma^{\beta_s}(\sW_{d_s})$$
where $\alpha_2=(1^{d_2-1})$ and $\alpha_i=(0)$ for all $i\ne 2$
and $\beta_{s-1}=(2)$, $\beta_i=(0)$ for all $i\ne s-1$.
Since $d_{2}\ge 3$ and $d_{s}-d_{s-1}=(n-1)-(n-d_2)=d_2-1\ge 2$, these
vector bundles are part of the exceptional collection constructed in (\ref{t:kapranov}).
Then 
$$\Ext^*_P(\sigma^*(\sF),\sG)=H^*(P,\sE)$$
where $\sE=\Sigma^{(1^{d_2-1})}(\sQ_{d_{s-1}})\otimes \Sigma^{(2)}(\sW_{d_{s-1}})$.
We factor the structure morphism of $P=F(d_1,\dots,d_s,V)$
into $q:F(d_1,\dots,d_s,V)\to F(d_{s-1},d_s,V)$ and the structure morphism $t$ for 
$F(d_{s-1},d_s,V)$.
Then 
$$H^*(P,\sE)=
Rt_*(Rq_*(\sE))
=Rt_*(\sE)=H^*(F(d_{s-1},d_s,V),\sE)$$
where we use the projection formula and the fact that our bundle $\sE$
is defined over $F(d_{s-1},d_s,V)$.
We now factor the structure morphism $t$ for $F(d_{s-1},d_s,V)$
into the relative Grassmann bundle $p_{s-1}:F(d_{s-1},d_s,V)\to F(d_{s},V)=\gr(d_s,V)$ and the structure morphism $p_s$ for $\gr(d_s,V)$.
So 
$$H^*(F(d_{s-1},d_s,V),\sE)=R(p_s)_*(R(p_{s-1})_*(\sE))$$
We now analyse $R(p_{s-1})_*(\sE)$:
Since $\sE=\Sigma^{(1^{r})}(\sQ_{d_{s-1}})\otimes \Sigma^{(2)}(\sW_{d_{s-1}})$,
where $r=d_2-1=d_s-d_{s-1}$,
we need to reexpress the bundle $\Sigma^{(1^r)}(\sQ_{d_{s-1}})$ in terms of 
Schur functors of the bundles $\sW_{d_s}/\sW_{d_{s-1}}$
and $\sQ_{d_s}$.
Note that there is a natural exact sequence of bundles:
$$0\to \sW_{d_s}/\sW_{d_{s-1}}\to \sQ_{d_{s-1}}\to \sQ_{d_s}\to 0$$
Let $\sB=\sQ_{d_{s-1}}$, $\sB'=\sW_{d_s}/\sW_{d_{s-1}}$ and $\sB''=\sQ_{d_s}$.
Then since the Schur functor $\Sigma^{(1^r)}$ is in fact $\bigwedge^r$
and by assumption $\sB''$ has rank 1, we may use Corollary~\ref{extalgses} to obtain an 
exact sequence
$$0\to \bigwedge^r(\sB')\to \bigwedge^r(\sB)\to \bigwedge^{r-1}(\sB')\otimes \sB''\to 0$$
So we have 
$$0\to \bigwedge^r(\sW_{d_s}/\sW_{d_{s-1}})\to \bigwedge^r(\sQ_{d_{s-1}})\to
\bigwedge^{r-1}(\sW_{d_s}/\sW_{d_{s-1}})\otimes \sQ_{d_s}\to 0$$
Tensoring this with $\Sigma^{(2)}(\sW_{d_{s-1}})$, we get
$$0\to \sE'\to \sE \to \sE''\to 0$$
where 
$$\sE'=\Sigma^{(2)}(\sW_{d_{s-1}})\otimes \bigwedge^r(\sW_{d_s}/\sW_{d_{s-1}})
\mbox{ and }\sE''=\Sigma^{(2)}(\sW_{d_{s-1}})\otimes\bigwedge^{r-1}(\sW_{d_s}/\sW_{d_{s-1}})\otimes \sQ_{d_s}.$$
We wish to compute $R(p_{s-1})_*(\sE)$.
We note that 
$$0\to R(p_{s-1})_*(\sE')\to R(p_{s-1})_*(\sE)\to R(p_{s-1})_*(\sE'')\to 0$$
in the derived category. Using \mref{grasstoflag}, we have that
$$R(p_{s-1})_*(\sE')=R(\pi_{s-1})_*(\sO_{\flag(\sW_{d_s})}(\chi))$$
where $\pi_{s-1}:\flag(\sW_{d_s})\to F(d_s,V)$ is the relative full flag bundle
and $\chi=(0,\dots,-2,-1,\dots,-1)\in \ZZ^{n-1}$ has a $-2$ in the $d_{s-1}$ position followed by a string of $r$ (-1)s.
Adding $\rho=(n-1,\dots,1)$ to $\chi$, we find that we have $n-d_{s-1}-2$
in both $d_{s-1}$ and $d_{s-1}+1$ positions and so 
$R(p_{s-1})_*(\sE')=0$ by the relative version of Borel-Weil-Bott.

We now calculate 
\begin{eqnarray*}R(p_{s-1})_*(\sE)&=&R(p_{s-1})_*(\Sigma^{(2)}(\sW_{d_{s-1}})\otimes\bigwedge^{r-1}(\sW_{d_s}/\sW_{d_{s-1}})\otimes \sQ_{d_s})\\
&=&R(p_{s-1})_*(\Sigma^{(2)}(\sW_{d_{s-1}})\otimes\bigwedge^{r-1}(\sW_{d_s}/\sW_{d_{s-1}}))\otimes \sQ_{d_s}
\end{eqnarray*}
where the last line follows by the projection formula as $\sQ_{d_s}$ is defined over $F(d_s,V)$.
Note that
$$R(p_{s-1})_*(\Sigma^{(2)}(\sW_{d_{s-1}})\otimes\bigwedge^{r-1}(\sW_{d_s}/\sW_{d_{s-1}}))
=R(\pi_{s-1})_*(\sO_{\flag(\sW_{d_s})}(\chi))$$
where $\pi_{s-1}:\flag(\sW_{d_s})\to F(d_s,V)$ is the relative full flag
and 
$$\chi=(0,\dots,0,-2,0,-1,\dots,-1)\in \ZZ^{n-1}$$ has a -2 in the $d_{s-1}$ spot, a 0 in the $d_{s-1}+1$ spot and $(r-1)$ $-1$'s in the remaining positions.  A similar calculation using (\ref{bbw}) shows
that the above bundle is $\bigwedge^{r+1}(\sW_{d_s})[1]$.
Putting this together with the above shows that 
$$H^*(P,\sE)=R(p_s)_*(\bigwedge^{r+1}(\sW_{d_s})[1]\otimes \sQ_{d_s}).$$
For the structure morphism $\pi_s$ of $\flag(V)$,
we see that 
$$R(p_s)_*(\bigwedge^{r+1}(\sW_{d_s})[1]\otimes \sQ_{d_s})=R(\pi_s)_*(\sO(\chi'))$$
where $\chi'=(0,\dots,0,-1,\dots,-1)\in \ZZ^n$ has a string of $r+2=d_2+1$
(-1)'s at the end.  Since this weight is dominant, an application of (\ref{bbw}) produces
$\bigwedge^{r+1}(V)[1]$. Hence
$$\Ext^1_P(\sigma^*(\sF),\sG)=\bigwedge^{r+2}(V)\ne 0$$
and all other Ext groups vanish.
Note that $r+2=d_2+1\le n$ by assumption so that $\bigwedge^{r+2}(V)\ne 0$.

\noindent\textbf{Case 3: $d_1=1$, $d_2=2$.}

Note that $n\ge 3$, and we have $d_s=n-1$ and $d_{s-1}=n-2$ by the 
symmetry assumption.
Take $\sF=\sW_1$ and $\sG=\sW_{n-2}\otimes \sW_{n-1}$.
Note that 
$$\sF=\Sigma^{\alpha_1}(\sW_{d_1})\otimes \cdots \otimes\Sigma^{\alpha_s}(\sW_{d_s})$$
$$\sG=\Sigma^{\beta_1}(\sW_{d_1})\otimes \cdots \otimes \Sigma^{\beta_s}(\sW_{d_s})$$
where $\alpha_1=(1)$ and $\alpha_i=(0)$ for all $i\ne 1$
and $\beta_{s-1}=\beta_s=(1)$, $\beta_i=(0)$ for all $i\ne s-1,s$.
Since $d_i\ge 1$ for all $i$, these
bundles are clearly in the exceptional collection constructed in (\ref{t:kapranov}).
Then 
$$\Ext^*_P(\sigma^*(\sF),\sG)=H^*(P,\sE)$$
where $\sE=\sQ_{n-1}\otimes \sW_{n-2}\otimes \sW_{n-1}$.
We factor the structure morphism of $F=F(d_1,\dots,d_s,V)$
into $q:F(d_1,\dots,d_s,V)\to F(d_{s-1},d_s,V)$ and the structure morphism $t$ for 
$F(d_{s-1},d_s,V)$.
The same calculation as in the second case shows that for $P=F(d_1,\dots,d_s,V)$
we have 
$$H^*(P,\sE)=H^*(F(d_{s-1},d_s,V),\sE)$$
since our bundle $\sE$
is defined over $F(d_{s-1},d_s,V)=F(n-2,n-1,V)$.
We now factor the structure morphism $t$ for $F(n-2,n-1,V)$
into the relative Grassmann bundle $p_{s-1}:F(n-2,n-1,V)\to F(n-1,V)=\gr(n-1,V)$ and the structure morphism $p_s$ for $\gr(n-1,V)$. 
So 
$$H^*(F(n-2,n-1,V),\sE)=R(p_s)_*(R(p_{s-1})_*(\sE))$$
We now analyse $R(p_{s-1})_*(\sE)$:
Note that we have an exact sequence
$$0\to \sW_{n-1}/\sW_{n-2}\to \sQ_{n-2}\to \sQ_{n-1}\to 0$$
Tensoring with $\sW_{n-2}\otimes \sW_{n-1}$ we get an exact
sequence
$$0\to \sE'\to \sE\to \sE''\to 0$$
where $\sE'=(\sW_{n-1}/\sW_{n-2})\otimes\sW_{n-2}\otimes \sW_{n-1}$
and $\sE''=\sQ_{n-1}\otimes \sW_{n-2}\otimes \sW_{n-1}$. 
We then have an exact sequence
$$0\to R(p_{s-1})_*(\sE')\to R(p_{s-1})_*(\sE)\to R(p_{s-1})_*(\sE'')\to 0$$
in the derived category.
We first analyse $R(p_{s-1})_*(\sE'')$.
By the projection formula, we have
$$R(p_{s-1})_*(\sE'')=R(p_{s-1})_*(\sW_{n-2})\otimes \sW_{n-1}\otimes \sQ_{n-1}$$
But 
$$R(p_{s-1})_*(\sW_{n-2})=R(\pi_{s-1})_*(\sO_{\flag(\sW_{n-1})}(\chi))$$
where $\pi_{s-1}:\flag(\sW_{n-1})\to F(n-1,V)$ is the relative full flag
and 
$$\chi=(0,\dots,0,-1,0)\in \ZZ^{n-1}.$$  The last two entries 
of $\chi+\rho$ are 1 and so by (\ref{bbw}), we see that 
$R(p_{s-1})_*(\sW_{n-2})=0$ and so $R(p_{s-1})_*(\sE'')=0$.
Then
$$R(p_{s-1})_*(\sE)=R(p_{s-1})_*(\sE')=R(p_{s-1})_*((\sW_{n-1}/\sW_{n-2})\otimes \sW_{n-2})\otimes \sW_{n-1}$$
where the last equality follows from the projection formula.
Then 
$$R(p_{s-1})_*(\sW_{n-2}\otimes (\sW_{n-1}/\sW_{n-2}))=R(\pi_{s-1})_*(\sO_{\flag(\sW_{n-1})}(\chi'))$$
where $\pi_{s-1}:\flag(\sW_{n-1})\to F(n-1,V)$ is the relative full flag
and $$\chi=(0,\dots,0,-1,-1)\in \ZZ^{n-1}.$$  Since $\chi$ is dominant, so 
is $\chi+\rho$, and so  by (\ref{bbw}), we see that 
$$R(p_{s-1})_*(\sW_{n-2}\otimes \sW_{n-1}/\sW_{n-2})=\bigwedge^2(\sW_{n-1}).$$
Then by Littlewood Richardson, we have
$$R(p_{s-1})_*(\sE')=\bigwedge^2(\sW_{n-1})\otimes \sW_{n-1}=\bigwedge^3(\sW_{n-1})\oplus \Sigma^{(2,1)}(\sW_{n-1})$$
[Note that here we have $n\ge 3$ and so $\Sigma^{(2,1)}(\sW_{n-1})\ne 0$ but 
$\bigwedge^3(\sW_{n-1})\ne 0$ if and only if $n\ge 4$.
This will turn out not to matter as this term vanishes in the next step.]
So 
$$R(p_s)_*(R(p_{s-1})_*(\sE))=R(p_s)_*(\bigwedge^3(\sW_{n-1}))\oplus 
R(p_s)_*(\Sigma^{(2,1)}(\sW_{n-1}))$$
Let $\pi_s$ be the structure morphism for $\flag(V)$.
Then we have
$$R(p_s)_*(\bigwedge^3(\sW_{n-1}))=R(\pi_s)_*(\chi_1)$$
where $\chi_1=(0,\dots,0,-1,-1,-1,0)\in \ZZ^n$.
Here $\chi_1+\rho$ has a repeat of 1 in the last two entries and 
so $R(p_s)_*(\bigwedge^3(\sW_{n-1}))=0$ by (\ref{bbw}).
But we also have
$$R(p_s)_*(\Sigma^{(2,1)}(\sW_{n-1}))=R(\pi_s)_*(\chi_2)$$
where $\chi_2=(0,\dots,0,0,-1,-2,0)\in \ZZ^n$.
Here $\chi_2+\rho=(n,n-1,\dots,4,2,0,1)$. Letting $w=(n-1,n)$,
we see that $\alpha_2=w\cdot \chi_2=(0,\dots,0,-1,-1,-1)$
and 
so $R(p_s)_*(\Sigma^{(2,1)}(\sW_{n-1}))=\bigwedge^3(V)[1]$ by (\ref{bbw}).
Then we have
$$\Ext^1_P(\sigma_*(\sF),\sG)=\bigwedge^3(V)\ne 0$$
and all other Ext groups vanish.
\end{proof}

\section{Applications to arithmetic toric varieties}

\begin{theorem}(\cite{Be})\label{t:belinson}
The derived category $D^b(\mathbb{P}^n)$ is generated by the strong exceptional collection
$$ \{ \mathcal{O}(-n), \mathcal{O}(-n+1), \cdots, \mathcal{O}(-1), \mathcal{O} \}. $$
\end{theorem}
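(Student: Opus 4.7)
The plan is to verify separately the three assertions implicit in the statement: each line bundle $\mathcal{O}(i)$ with $-n\le i\le 0$ is exceptional, the ordered collection is strong exceptional, and the collection generates $\bD^b(\mathbb{P}^n)$. Throughout I will use the standard identification $\Ext^m(\mathcal{O}(i),\mathcal{O}(j))\cong H^m(\mathbb{P}^n,\mathcal{O}(j-i))$.

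The first two steps reduce to line-bundle cohomology on projective space: $H^m(\mathbb{P}^n,\mathcal{O}(d))$ vanishes for all $m$ whenever $-n\le d\le -1$, is concentrated in degree zero for $d\ge 0$, and in degree $n$ for $d\le -n-1$. From this, each $\mathcal{O}(i)$ is exceptional because $\End(\mathcal{O}(i))=H^0(\mathbb{P}^n,\mathcal{O})=k$ and the higher self-$\Ext$'s are $H^m(\mathbb{P}^n,\mathcal{O})=0$ for $m>0$. For $i<j$ in the collection's indexing (so $\mathcal{O}(-n+j)$ appears later than $\mathcal{O}(-n+i)$), the required vanishing of all $\Ext^m(\mathcal{O}(-n+j),\mathcal{O}(-n+i))=H^m(\mathbb{P}^n,\mathcal{O}(i-j))$ follows since $-n\le i-j\le -1$ lands in the middle-range vanishing. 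Strongness, meaning vanishing of higher $\Ext$'s in the remaining direction, follows from the fact that $H^m(\mathbb{P}^n,\mathcal{O}(d))=0$ for $m>0$ whenever $d\ge 0$.

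The substantive step is generation. I plan to use Beilinson's resolution of the diagonal on $\mathbb{P}^n\times \mathbb{P}^n$,
\[
0\to \mathcal{O}(-n)\boxtimes \Omega^n(n)\to \cdots \to \mathcal{O}(-1)\boxtimes \Omega^1(1)\to \mathcal{O}\boxtimes \mathcal{O}\to \mathcal{O}_\Delta\to 0,
\]
obtained as the Koszul complex on the tautological section of $p_1^*\mathcal{O}(1)\otimes p_2^*\mathcal{T}_{\mathbb{P}^n}(-1)$ cutting out the diagonal. For any $\mathcal{F}\in\coh(\mathbb{P}^n)$, the identity $\mathcal{F}\cong R p_{1*}(p_2^*\mathcal{F}\otimes \mathcal{O}_\Delta)$ combined with this resolution produces a hypercohomology spectral sequence converging to $\mathcal{F}$ whose $E_1$-page consists of direct sums of $\mathcal{O}(-i)\otimes_k H^j(\mathbb{P}^n,\mathcal{F}\otimes \Omega^i(i))$. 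This places every coherent sheaf in the thick subcategory generated by the collection, and Theorem \ref{t:rn} together with Propositions \ref{p:compactGenerator} and \ref{p:compactPerfect} then upgrades this to generation of $\bD^b(\mathbb{P}^n)$. Alternatively one can invoke Theorem \ref{t:kapranov} directly: for the partial flag variety $F(1,V)\cong \mathbb{P}(V)$, the Young diagrams fitting in the $1\times n$ box are the partitions $(j)$ with $0\le j\le n$, and $\Sigma^{(j)}\sW_1^{\univ}\cong \mathcal{O}(-j)$, recovering the Beilinson collection. The main obstacle is precisely this generation step, since exceptionality and strongness are immediate from line-bundle cohomology; generation requires genuine geometric input via the Koszul resolution of the diagonal (or Kapranov's theorem specialized to projective space).
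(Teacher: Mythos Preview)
Your proposal is correct and follows the standard argument due to Beilinson. Note, however, that the paper does not supply its own proof of this theorem: it simply cites \cite{Be} and moves on. So there is no paper-side argument to compare against beyond the original reference.

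That said, your sketch is essentially Beilinson's own proof: the exceptionality and strongness are immediate from the cohomology of line bundles on $\mathbb{P}^n$, and generation follows from the Koszul resolution of the diagonal $\mathcal{O}_\Delta$ on $\mathbb{P}^n\times\mathbb{P}^n$. Your alternative observation, that this is the special case $F(1,V)$ of Theorem~\ref{t:kapranov}, is also valid, though of course Kapranov's proof for flag varieties itself generalizes Beilinson's diagonal argument, so this is not logically independent.
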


Now let us fix some notation. For projective space $\mathbb{P}^n$, we always choose $\{\mathcal{O}(1)\}$ as a basis of $\Pic(\mathbb{P}^n)=\mathbb{Z}$; for a projective bundle $p: \mathbb{P}(\mathcal{E}) \to \mathbb{P}^n$, we always choose $\{p^*\mathcal{O}(1), \mathcal{O}_{\mathcal{E}}(1)\}$ as a basis of $\Pic(\mathcal{P}(\mathcal{E}))=\mathbb{Z} \oplus \mathbb{Z}$ and we denote by $\mathcal{O}(i,j)$ the tensor product $p^*\mathcal{O}(i) \otimes \mathcal{O}_{\mathcal{E}}(j)$; and so on.

\begin{proposition}
Consider projective bundle $p: \mathbb{P}(\mathcal{E}) \to \mathbb{P}^n$. Assume $\mathcal{E} = \mathcal{L}_1 \oplus \cdots \oplus \mathcal{L}_{r+1}$ such that $\mathcal{L}_i$ is in $\Pic(\mathbb{P}^n)^+ \simeq \mathbb{Z}_{\geq 0}$ for all $1\leq i \leq r+1$. Then 
$$ (\mathcal{O}(-n,-r), \mathcal{O}(-n+1,-r), \cdots, \mathcal{O}(0,-r), \cdots, \mathcal{O}(-n,0), \cdots, \mathcal{O}(0,0))$$
is a full strong exceptional collection of coherent sheaves on $\mathbb{P}(\mathcal{E})$.
\end{proposition}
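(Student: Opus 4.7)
The plan is to combine Beilinson's theorem (Theorem~\ref{t:belinson}) for $\mathbb{P}^n$ with Orlov's projective bundle decomposition, together with direct cohomology calculations that exploit the positivity assumption on the $\mathcal{L}_i$. The listed collection consists precisely of the sheaves $\mathcal{O}(a,b) = p^*\mathcal{O}(a)\otimes \mathcal{O}_{\mathcal{E}}(b)$ with $(a,b)$ ranging over $\{-n,\dots,0\}\times\{-r,\dots,0\}$, ordered lexicographically so that $b$ is the outer index.

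For fullness, I would invoke Orlov's semiorthogonal decomposition, which expresses $D^b(\mathbb{P}(\mathcal{E}))$ as generated by $p^*D^b(\mathbb{P}^n)\otimes \mathcal{O}_{\mathcal{E}}(j)$ for $-r\le j\le 0$. Combined with the fullness of Beilinson's collection on each $p^*D^b(\mathbb{P}^n)$-factor, this shows that the given sheaves generate $D^b(\mathbb{P}(\mathcal{E}))$.

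The main step is verifying the strong exceptional property. I would begin with the relative cohomology table
\[
Rp_*\mathcal{O}_{\mathcal{E}}(b)=\begin{cases} S^b\mathcal{E} & \text{if } b\ge 0,\\ 0 & \text{if } -r\le b\le -1,\end{cases}
\]
and note that by hypothesis $\mathcal{E}=\bigoplus_i \mathcal{O}(d_i)$ with $d_i\ge 0$, so $S^b\mathcal{E}$ is a direct sum of line bundles $\mathcal{O}(m)$ with $m\ge 0$ whenever $b\ge 0$. By the projection formula,
\[
\Ext^k\bigl(\mathcal{O}(a_1,b_1),\mathcal{O}(a_2,b_2)\bigr)=H^k\bigl(\mathbb{P}^n,\mathcal{O}(a_2-a_1)\otimes Rp_*\mathcal{O}_{\mathcal{E}}(b_2-b_1)\bigr).
\]
To check the exceptional ordering, observe that whenever $(a_j,b_j)$ comes strictly after $(a_i,b_i)$, either $b_i-b_j\in\{-r,\dots,-1\}$ and the middle case of the table kills everything, or $b_i=b_j$ and $a_i-a_j\in\{-n,\dots,-1\}$, giving $H^k(\mathbb{P}^n,\mathcal{O}(a_i-a_j))=0$ for all $k$. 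For strong exceptionality, when $b_2-b_1\in\{-r,\dots,-1\}$ vanishing is again immediate; when $b_2-b_1\ge 0$ the computation reduces to $H^k(\mathbb{P}^n,\mathcal{O}(a_2-a_1+m))$ for various $m\ge 0$, and since $a_2-a_1+m\ge -n$ in every case, all higher cohomology vanishes.

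The only real subtlety I anticipate is bookkeeping: fixing the convention for $\mathcal{O}_{\mathcal{E}}(1)$ and verifying that the positivity hypothesis $d_i\ge 0$ is exactly what is needed so that $S^b\mathcal{E}$ avoids twists $\mathcal{O}(m)$ with $m<-n$ against the range of $a_2-a_1$ appearing. Once those conventions are pinned down, everything reduces to Beilinson's theorem, Orlov's projective bundle decomposition, and the standard cohomology of line bundles on $\mathbb{P}^n$.
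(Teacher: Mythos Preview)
Your proposal is correct and follows essentially the same route as the paper: fullness and the exceptional ordering come from Beilinson's collection on $\mathbb{P}^n$ combined with Orlov's projective bundle decomposition, and strongness is reduced via the projection formula to the vanishing of higher cohomology of line bundles on $\mathbb{P}^n$. The paper's proof is in fact a two-line sketch that cites Theorem~\ref{t:belinson} and \cite[Corollary~2.7]{Or} and says the remaining strongness check is ``an easy computation of projective space cohomology''; you have simply written out that computation explicitly, including the relative cohomology table for $Rp_*\mathcal{O}_{\mathcal{E}}(b)$ and the role of the positivity hypothesis on the $\mathcal{L}_i$.
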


\begin{proof}
By Theorem \mref{t:belinson} and \cite[Corollary 2.7]{Or}, we only need to show this set is a strong set, which follows from an easy computation of  projective space cohomology.
\end{proof}

More generally, we  have
\begin{corollary}\label{c:FSEcor}
Consider a series of projective bundles $\mathbb{P}(\mathcal{E}_m) \to \cdots \to \mathbb{P}(\mathcal{E}_i) \to \mathbb{P}^{r_0}$. Assume $\mathcal{E}_i$ is decomposable of rank $r_i+1$ and all its summands are in $\Pic(\mathbb{P}(\mathcal{E}_{i-1})^+ \simeq (\mathbb{Z}_{\geq 0})^{\oplus i}$ for all $1\leq i \leq m$. Then the set
$$ \{ \mathcal{O}(j_0, j_1, \cdots, j_m): -r_i \leq j_i \leq 0, 0 \leq i \leq m \}$$
is a full strong exceptional collection of coherent sheaves ono $\mathbb{P}(\mathcal{E}_m)$ by the lexicographical order on $(j_0, j_1, \cdots, j_m)$.
\end{corollary}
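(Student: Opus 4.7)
The plan is to induct on $m$. The base case $m=0$ is Beilinson's Theorem~\ref{t:belinson} for $\mathbb{P}^{r_0}$, and $m=1$ is the Proposition just proved. For the inductive step, let $p: \mathbb{P}(\mathcal{E}_m) \to \mathbb{P}(\mathcal{E}_{m-1})$ denote the top projection and assume the statement for height $m-1$. Thus the collection $\{\mathcal{O}(j_0,\dots,j_{m-1}) : -r_i \leq j_i \leq 0\}$ is a full strong exceptional collection on the base. I would first invoke Orlov's theorem on projective bundles (\cite[Cor.~2.7]{Or}) with this collection as input: it immediately yields that $\{p^*\mathcal{O}(j_0,\dots,j_{m-1}) \otimes \mathcal{O}_{\mathcal{E}_m}(j_m)\} = \{\mathcal{O}(j_0,\dots,j_m)\}$ is full and exceptional with respect to the lexicographic order, exactly as in the preceding Proposition.

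The content of the proof then reduces to checking strongness, i.e.\ that $\Ext^k(\mathcal{O}(\vec{j}), \mathcal{O}(\vec{j}')) = H^k(\mathbb{P}(\mathcal{E}_m), \mathcal{O}(\vec{a})) = 0$ for every $k > 0$, where $a_i = j_i' - j_i$ satisfies $-r_i \leq a_i \leq r_i$. I would push this down along $p$ via the projection formula, obtaining
\[
Rp_*\mathcal{O}(a_0,\dots,a_m) \;\cong\; \mathcal{O}(a_0,\dots,a_{m-1}) \otimes Rp_*\mathcal{O}_{\mathcal{E}_m}(a_m).
\]
Classical cohomology of $\mathbb{P}^{r_m}$-bundles gives $Rp_*\mathcal{O}_{\mathcal{E}_m}(a_m) = 0$ for $-r_m \leq a_m \leq -1$, which handles those cases. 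For $0 \leq a_m \leq r_m$ the pushforward is $\mathrm{Sym}^{a_m}(\mathcal{E}_m)$ concentrated in degree $0$, and since $\mathcal{E}_m = \mathcal{L}_1 \oplus \cdots \oplus \mathcal{L}_{r_m+1}$ with each $\mathcal{L}_j \in \Pic(\mathbb{P}(\mathcal{E}_{m-1}))^+$, this symmetric power is a direct sum of line bundles whose multi-degrees lie in the non-negative cone.

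The Leray spectral sequence then reduces the vanishing $H^k(\mathbb{P}(\mathcal{E}_m), \mathcal{O}(\vec{a})) = 0$ to vanishing on $\mathbb{P}(\mathcal{E}_{m-1})$ of $H^k$ of a direct sum of line bundles of the form $\mathcal{O}(b_0,\dots,b_{m-1})$ with each $b_i \geq -r_i$. To finish, I would separately record, and prove by the same inductive machinery (pushdown plus projective-bundle cohomology, with Serre's vanishing on $\mathbb{P}^{r_0}$ as the base), the auxiliary lemma: on an iterated projective bundle satisfying the hypotheses of the Corollary, higher cohomology of $\mathcal{O}(b_0,\dots,b_{m-1})$ vanishes whenever $b_i \geq -r_i$ for every $i$. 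Combining this lemma with the reduction above closes the induction.

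The main obstacle is exactly the bookkeeping in the last step: one must verify that after tensoring with a positive-cone line bundle coming out of $\mathrm{Sym}^{a_m}(\mathcal{E}_m)$, the resulting multi-degree still lands in the window on which iterated $\mathbb{P}^{r_i}$-bundle cohomology vanishes in positive degree. Once the auxiliary vanishing lemma is stated and proved in parallel with the Corollary, no further calculation is required and the induction closes uniformly.
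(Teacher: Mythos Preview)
Your argument is correct and is precisely the natural inductive extension of the paper's proof of the preceding Proposition; the paper itself gives no separate proof of this Corollary, simply stating it as the evident generalisation (``More generally, we have''). Your observation that the strongness step requires the auxiliary vanishing lemma for $\mathcal{O}(b_0,\dots,b_{m-1})$ with $b_i\ge -r_i$ (no upper bound), rather than merely the inductive hypothesis $-r_i\le b_i\le r_i$, is a genuine point that the paper elides in its ``easy computation'' remark, and your parallel induction for that lemma is the right way to close the argument.
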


Recall that, see \cite{ELST}, \emph{an arithmetic torus} over $k$ of rank n is an algebraic group $\mathcal{T}$ over $k$ such that $\mathcal{T}_l \simeq T_{N, l}$ for some finite Galois extension $l/k$ and lattice $N$ of rank $n$, and an \emph{arithmetic toric variety} over $k$ is a pair $(Y, \mathcal{T})$, where $\mathcal{T}$ is an arithmetic torus over $k$ and $Y$ is a normal variety over $k$ equipped with a faithful action of $\mathcal{T}$ which has a dense orbit. Let $(Y_{\Sigma,l}, T_{N,l})$ be its split toric variety and $G=\Gal(l/k)$, then the $G$-action on $(Y_{\Sigma,l}, T_{N,l})$ is determined by a conjugacy class of group homomorphisms $\varphi: G \to \Aut(N)$ such that $\varphi(G) \subseteq \Aut_{\Sigma}$.

\begin{definition} (\cite{B})
Let $\Sigma$ be a smooth complete fan, we call a nonempty subset $\mathcal{P} = \{x_1, \cdots, x_k\} \subseteq \Sigma(1)$ a \emph{primitive collection} if for each element $x_i \in \mathcal{P}$, the remaining elements $\mathcal{P}\setminus \{x_i\}$ generate a $(k-1)$-dimensional cone in $\Sigma$, while $\mathcal{P}$ itself does not generate any $k$-dimensional cone in $\Sigma$. We will call $\Sigma$ a \emph{splitting fan} if any two different primitive collections in $\Sigma(1)$ are disjoint.
\end{definition}

\begin{theorem}
{Let $(X, \mathcal{T})$ be an arithmetic toric variety over $k$, whose split toric variety corresponding to a splitting fan, then there exists a tilting bundle on $X$.
}
\end{theorem}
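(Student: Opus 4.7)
The plan is to produce a Galois-invariant tilting bundle on the split toric variety $Y_{\Sigma,l}$ and then apply Proposition~\ref{p:easy} to descend it to a tilting bundle on $X$. First I will invoke Batyrev's theorem \cite{B}: since $\Sigma$ is a splitting fan, the smooth complete split toric variety $Y_\Sigma$ admits an iterated projective bundle structure
\[
Y_\Sigma = \PP(\sE_m) \to \PP(\sE_{m-1}) \to \cdots \to \PP(\sE_1) \to \PP^{r_0}
\]
in which each $\sE_i$ is a direct sum of $r_i+1$ globally generated line bundles. Since the construction is defined over $\ZZ$, the tower descends to the prime subfield $\QQ$. Corollary~\ref{c:FSEcor} then produces a full strong exceptional collection consisting of the line bundles $\sO(j_0,\ldots,j_m)$ with $-r_i\le j_i\le 0$, and by Lemma~\ref{l:FSEcollection} their direct sum $\sT$ is a locally free tilting sheaf on $Y_\Sigma$; iterating Proposition~\ref{p:triangle} over $\overline{k}$ shows that $\End(\sT)$ is geometrically of finite global dimension.

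The crux is to verify the remaining hypothesis of Proposition~\ref{p:easy}, namely that $\phi_X(g)^*\sT\cong\sT$ for every $g\in G=\Gal(l/k)$. Since by assumption the cocycle $\phi_X$ factors through $\Aut_\Sigma\subseteq\Aut(N)$, it suffices to show that every element of $\Aut_\Sigma$ preserves $\sT$ up to isomorphism. The splitting fan condition guarantees that the primitive collections $P_0,\ldots,P_m$ are pairwise disjoint, index the factors of the Batyrev tower with $|P_i|=r_i+1$, and yield a basis of $\Pic(Y_\Sigma)\cong\ZZ^{m+1}$ dual to the relative tautological hyperplane classes. Any $\psi\in\Aut_\Sigma$ permutes the primitive collections while preserving their cardinalities, hence induces a permutation $\sigma$ of $\{0,\ldots,m\}$ with $r_{\sigma(i)}=r_i$ and acts on $\Pic$ by permuting this basis accordingly. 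On the exceptional collection $\psi^*$ therefore sends $\sO(j_0,\ldots,j_m)$ to $\sO(j_{\sigma^{-1}(0)},\ldots,j_{\sigma^{-1}(m)})$, and the condition $r_{\sigma(i)}=r_i$ forces the ``box'' $\prod_{i=0}^{m}[-r_i,0]$ to be preserved as a subset of $\Pic$. Thus the set of summands of $\sT$ is stable under $\psi$, giving $\psi^*\sT\cong\sT$, and applying Proposition~\ref{p:easy} yields the desired tilting bundle $\pi_*(\sT_l)$ on $X$.

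The principal obstacle is the combinatorial claim in the second paragraph: one must argue carefully that, for a splitting fan, the Batyrev tower structure identifies $\Pic(Y_\Sigma)$ with $\ZZ^{m+1}$ in a manner canonically compatible with $\Aut_\Sigma$, so that $\Aut_\Sigma$ acts on $\Pic$ purely by permutation of the chosen nef basis (rather than by more general lattice automorphisms) and the induced index permutation matches the permutation of the ranks $r_i$. Once this compatibility is justified via the canonical bijection between primitive collections and tower factors, the remainder of the argument is a direct application of the descent mechanism already developed in Section~\ref{ss:descent}.
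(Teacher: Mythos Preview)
Your proposal is essentially correct and follows the same strategy as the paper: both use Batyrev's iterated projective bundle structure for splitting fans, the full strong exceptional collection of Corollary~\ref{c:FSEcor}, the observation that the Galois action permutes the line bundles in that collection, and the descent mechanism of \S\ref{ss:descent}. The paper's proof supplies exactly the justification you flag as the ``principal obstacle'': rather than asserting that $\Aut_\Sigma$ acts on $\Pic$ by permuting a canonical basis, it builds the tower iteratively by first taking a single primitive collection together with its entire Galois orbit, realising that orbit as a fibre product $\PP(\sE_1)\times_{Y_l}\cdots\times_{Y_l}\PP(\sE_m)$ over a base $Y_l$ that again has a splitting fan with inherited Galois action, and then recursing; with the tower organised this way the Galois action visibly permutes the coordinates within each level, which is the concrete form of your permutation claim. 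The paper then invokes Proposition~\ref{p:main} rather than Proposition~\ref{p:easy}, but since the exceptional collection is already Galois-stable these two routes amount to the same descent.
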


\begin{proof}
Let $X_l$ be the corresponding split toric variety with splitting fan $\Sigma$ in a lattice $N$, where $l/k$ is a Galois extension with Galois group $G$. By \cite[Theorem 4.3]{B}, we have a projectivization  $X_l=\mathbb{P}(\mathcal{E}) \to X'_l$, which corresponds to a primitive collection $\mathcal{P}=\{x_1, x_2, \cdots, x_{k+1}\} \subseteq \Sigma(1)$ with primitive relation $x_1 + x_2 + \cdots + x_{k+1} = 0$ by \cite[Proposition 4.1]{B}. Since $\Sigma(1)$ generates $\Sigma$, the action of $G$ on $\Sigma$ is determined by the action of $G$ on $\Sigma(1)$. As $G$ preserves the primitive relationship and $\mathcal{P}$ has no intersection with any other primitive collection in $\Sigma(1)$, we must have either $g(\mathcal{P})=\mathcal{P}$ or $g(\mathcal{P}) \cap \mathcal{P} = \emptyset$ for any $g \in G$. Let the distinguished primitive collections $\mathcal{P}_1, \cdots, \mathcal{P}_m$ be the images of $\mathcal{P}$ under the action of $G$. Again, by \cite[Proposition 4.1]{B}, these primitive collections determine a series of projective bundles $\mathbb{P}(\mathcal{E}_1) \to \cdots \to \mathbb{P}(\mathcal{E}_m) \to Y_l$, where $Y_l$ is also a toric variety with splitting fan by \cite[Theorem 4.3]{B}.

By \cite[page 59]{O}, we may construct the fan $\Sigma$ from the fan $\Sigma_{Y_l}$. The Galois $G$-action on $X_l$ induces an Galois $G$-action on $Y_l$. Let $Y_l$ descend to $(Y_k, \mathcal{T}')$. Then we have a compatible commutative diagram:
\[
\xymatrix{  X_l \ar[r]
             \ar[d] & (X, \mathcal{T}) \ar[d] \\
            Y_l \ar[r]  & (Y_k, \mathcal{T}').  }
\]

Actually, for every $\tau \in S_m$, the permutation group of the set $\{1, 2, \cdots, m\}$, we have a series of projective bundles $\mathbb{P}(\mathcal{E}^{\tau}_{\tau_1}) \to \mathbb{P}(\mathcal{E}^{\tau}_{\tau_2}) \to \cdots \to \mathbb{P}(\mathcal{E}^{\tau}_{\tau_m}) \to Y_l$. Thus each of these primitive collections $\mathcal{P}_1, \cdots, \mathcal{P}_m$ induces a projective bundle $\mathbb{P}(\mathcal{E}_i) \to Y_l, i=1,\cdots,m$. The $G$-action on $X_l$ induces commutative diagrams
\[
\xymatrix{  \mathbb{P}(\mathcal{E}_i) \ar[r]^-{\rho_{g_{i,j}}}
             \ar[d] & \mathbb{P}(\mathcal{E}_j) \ar[d] \\
            Y_l \ar[r]^-{\rho_{g_{i,j}}}  & Y_l  }
\]
for $1 \leq i, j \leq m$. So we may assume that $\{\rho_g^*(\mathcal{E}): g \in G\} = \{\mathcal{E}_1, \cdots, \mathcal{E}_m\}$.

Denote by $X'_l = \mathbb{P}(\mathcal{E}_1) \times_{Y_l} \cdots \times_{Y_l} \mathbb{P}(\mathcal{E}_m)$, we can see that $\Sigma_{X_l} \simeq \Sigma_{X'_l}$, and hence $X_l \simeq \mathbb{P}(\mathcal{E}_1) \times_{Y_l} \cdots \times_{Y_l} \mathbb{P}(\mathcal{E}_m)$.

 Thus we get the following compatible commutative diagram
\[
\xymatrix{  X_l= \mathbb{P}(\mathcal{E}_1) \times_{Y_l} \cdots \times_{Y_l} \mathbb{P}(\mathcal{E}_m) \ar[r]
             \ar[d] & (X, \mathcal{T}) \ar[d] \\
            Y_l \ar[r]  & (Y_l, \mathcal{T}'),  }
\]
where $\mathcal{E}_i$ is a decomposable bundle for all $1\leq i \leq m$ by \cite[Lemma 1.1]{DS}.

Iteratively, we get the following diagram:
\[
\xymatrix{  X_{t, l}= \mathbb{P}(\mathcal{E}_{t, 1}) \times_{X_{t-1,l}} \cdots  \times_{X_{t-1,l}} \mathbb{P}(\mathcal{E}_{t, m_t}) \ar[r]
             \ar[d] & (X_{t,k}, \mathcal{T}_l)= (X, \mathcal{T}) \ar[d] \\
             \vdots
             \ar[d] & \vdots \ar[d] \\
             X_{2,l}= \mathbb{P}(\mathcal{E}_{2, 1}) \times_{X_{1,l}} \cdots \times_{X_{1,l}} \mathbb{P}(\mathcal{E}_{2, m_2}) \ar[r]
             \ar[d] & (X_{2,k}, \mathcal{T}_2) \ar[d] \\
             X_{1,l}= \times_l^{m_1}  \mathbb{P}(\mathcal{E}_1) \ar[r]
             \ar[d] & (X_{1,k}, \mathcal{T}_1) \ar[d] \\
             X_{0,l}= \spec\,l  \ar[r] & \spec\,k }
\]
where $\mathcal{E}_1$ is a decomposable vector bundle of rank $r_1 + 1$ over $X_{0,L}$ and $\mathcal{E}_{i, j_i}$ is a decomposable vector bundle of rank $r_i+1$ over $X_{i-1,l}$ and $\{\rho_g^*(\mathcal{E}_{i, 1}): g \in G\} = \{\mathcal{E}_{i, 1}, \cdots, \mathcal{E}_{i, m_i}\}$ for $2 \leq i \leq t$ and $1 \leq j_i \leq m_i$.

As we know that $\Pic(X_{i,l}) \simeq \mathbb{Z}^{\oplus (m_1 + \cdots + m_i)}$, we may assume all the line bundle summands of $\mathcal{E}_i$ are in $(\mathbb{Z}_{\geq 0})^{\oplus (m_1 + \cdots + m_{i-1})}$ for all $2 \leq i \leq t$.

Without causing confusion, we use the same notation $\mathcal{O}_{\mathbb{P}(\mathcal{E}_{i,j})}(s) \,(1 \leq i \leq t,\, 1 \leq j \leq m_i)$ to denote the corresponding component in $\Pic(X_{h,L})$ for all $i \leq h \leq t$.  Denote by
\begin{eqnarray*}
&& \mathcal{O}(j_{1,1}, \cdots, j_{1,m_1}, \cdots, j_{t,1}, \cdots, j_{t,m_t}) \\
&=& (\mathcal{O}_{\mathbb{P}(\mathcal{E}_1)}(j_{1,1}), \cdots, \mathcal{O}_{\mathbb{P}(\mathcal{E}_1)}(j_{1,m_1}), \cdots, \mathcal{O}_{\mathbb{P}(\mathcal{E}_{t,1})}(j_{t,1}), \cdots, \mathcal{O}_{\mathbb{P}(\mathcal{E}_{t,m_t})}(j_{t,m_t})),
\end{eqnarray*}
where $-r_i \leq j_{i,k_i} \leq 0$ and $1 \leq k_i \leq m_i$ for $1 \leq i \leq t$.

Then by \mref{c:FSEcor}, the set $$\{\mathcal{O}(j_{1,1}, \cdots, j_{1,m_1}, \cdots, j_{t,1}, \cdots, j_{t,m_t}): -r_i \leq j_{i,k_i} \leq 0, 1 \leq k_i \leq m_i\}$$ is a full strong exceptional collection of $D^b(X_l)$ by the lexicographical order on $(j_{1,1}, \cdots, j_{1,m_1}, \cdots, j_{l,1}, \cdots, j_{l,m_l})$. For any $g \in G$, we have
\begin{eqnarray*}
&& \rho_g^*\mathcal{O}(j_{1,1}, \cdots, j_{1,m_1}, \cdots, j_{t,1}, \cdots, j_{t,m_t}) \\
&=& \mathcal{O}(j_{1,\tau_{1,g}(1)}, \cdots, j_{1,\tau_{1,g}(m_1)}, \cdots, j_{t,\tau_{t,g}(1)}, \cdots, j_{t,\tau_{t,g}(m_t)}),
\end{eqnarray*}
where $\tau_{i,g}, 1 \leq i \leq t$, are permutations of the corresponding sets $\{1, \cdots, m_i\}$. So it is also in the same set.

Let $$\mathcal{T} = \oplus \rho_g^*\mathcal{O}(j_{1,1}, \cdots, j_{1,m_1}, \cdots, j_{t,1}, \cdots, j_{t,m_t}),$$ then $\mathcal{T}$ is a tilting sheaf on $X_{t,l}$ by Lemma \mref{l:FSEcollection}, and $\mathcal{T}$ descends to a tilting bundle on $X$ by \mref{p:main}.
\end{proof}


\end{document}